\newtheorem{thm}{Theorem}[section]
\newtheorem{lem}[thm]{Lemma}
\newtheorem{prop}[thm]{Proposition}
\theoremstyle{definition}
\newtheorem{defn}[thm]{Definition}
\numberwithin{equation}{section}
\newcommand{\R}{\mathbb R}
\newcommand\E{\mathcal{E}}
\newcommand{\dy}{\,{\rm d}y}
\newcommand{\dx}{\,{\rm d}x}
\newcommand{\dt}{\,{\rm d}t}
\newcommand{\ds}{\,{\rm d}s}
\def\Xint#1{\mathchoice	
	{\XXint\displaystyle\textstyle{#1}}	{\XXint\textstyle\scriptstyle{#1}}	{\XXint\scriptstyle\scriptscriptstyle{#1}}	{\XXint\scriptscriptstyle\scriptscriptstyle{#1}}	\!\int}
\def\XXint#1#2#3{{\setbox0=\hbox{$#1{#2#3}{\int}$}	\vcenter{\hbox{$#2#3$}}\kern-.5\wd0}}
\def\dashint{\Xint-}
\newcommand{\crb}[1]{{#1}}
\newcommand{\dis}{\displaystyle}
\newcommand{\lec}{\lesssim}
\newcommand{\norm}[1]{\left\|#1\right\|}
\begin{document}
\thispagestyle{empty}

\vspace{1 true cm} {
\title[local regularity of axisymmetric solution]
 {Remarks on local regularity of axisymmetric solutions to the 3D Navier--Stokes equations}%
\author[H. Chen]{Hui Chen}%
\address[H. Chen]
 {School of Science, Zhejiang University of Science and Technology, Hangzhou, 310023, People's Republic of China }
\email{chenhui@zust.edu.cn}
\author[T.-P. Tsai]{Tai-Peng Tsai}
\address[T.-P. Tsai]
{Department of Mathematics, University of British Columbia, Vancouver, BC V6T1Z2, Canada }
\email{ttsai@math.ubc.ca}
\author[T. Zhang]{Ting Zhang*}

\address[T. Zhang]{School of Mathematical Sciences, Zhejiang University,  Hangzhou 310027, People's Republic of China}

\email{zhangting79@zju.edu.cn}
\thanks{$^*$Corresponding author.}

\begin{abstract}
In this note, a new local regularity criteria for the axisymmetric solutions to the 3D Navier--Stokes equations is investigated. It is slightly supercritical and implies an upper bound for the oscillation of $\Gamma=r u^{\theta}$: for any $0< \tau<1$, there exists a constant $c>0$,
$$
|\Gamma(r,x_{3},t)|\leq N e^{-c\, |\ln r|^{\tau}},\ 0<r\leq \frac{1}{4}.
$$
\end{abstract}

\maketitle


\noindent {{\sl Key words:} axisymmetric solutions; suitable weak solution; partial regularity; local energy estimates; Navier-Stokes equations}

\vskip 0.2cm

\noindent {\sl AMS Subject Classification (2000):} 35Q35; 35Q30; 76D03

\section{Introduction}
\label{intro}
In this paper, we discuss potential singularities of axisymmetric solutions to the incompressible 3D Navier--Stokes equations. Roughly speaking, we would like to show that if slightly supercritical quantities of an axisymmetric solution are bounded, then such a solution is smooth. In order to present it precisely, let us first recall the basic notions from the mathematical theory of the Navier--Stokes equations.

In Cartesian coordinates, the incompressible 3D Navier--Stokes equations are given by:
\begin{equation}\label{NS}
\partial_{t}\bm{u}+\left(\bm{u}\cdot\nabla\right) \bm{u}-\Delta  \bm{u}+\nabla \Pi=0,\quad\nabla\cdot  \bm{u}=0,
\end{equation}
here $\bm{u}(x,t)$ and $\Pi(x,t)$ denote the fluid velocity field and the pressure, respectively. A global weak solution with finite energy was constructed by Leray \cite{Leray1934} and Hopf \cite{Hopf1951}.  However, the uniqueness and regularity of such weak solution is still one of the most challenging open problems in the field of mathematical fluid mechanics.
One essential work is usually referred as Ladyzhenskaya--Prodi--Serrin conditions (see \cite{Escauriaza2003,Prodi1959,Seregin2011,Serrin1962,Takahashi1990} and the references therein), i.e. if the weak solution $\bm{u}$  satisfies
\begin{equation} \bm u\in L^{p}(0,T; L^{q}(\R^{3})),\ \
	\frac{2}{p}+\frac{3}{q}=1,\ 3\leq q\leq \infty,
\end{equation} then the weak solution is regular in $(0,T]$. \crb{Its endpoint $q=3$ is proved in the celebrated paper \cite{Escauriaza2003} of Escauriaza, Seregin, and \v{S}ver\'{a}k.}
In a remarkable recent development \cite{Tao2019}, Tao  used a new approach to provide the explicit quantitative estimates for solutions of the Navier--Stokes equations belonging to the critical space $L^{\infty}\left(0, T ; L^{3}\left(\R^{3}\right)\right)$. As a consequence of these quantitative estimates, Tao showed that if the solution $\bm{u}$ first blows up at $T_{*}>0$, then for some absolute constant $\alpha>0$,
\begin{align}
\limsup _{t \rightarrow T_{*}} \frac{\|\bm{u}(\cdot, t)\|_{L^{3}\left(\R^{3}\right)}}{\left(\ln \ln \ln \frac{1}{T_{*}-t}\right)^{\alpha}}=\infty.
\end{align}
T. Barker and C. Prange \cite{Barker2021} and S. Palasek \cite{Palasek2021} made progress on removing some logarithms from the blow-up rate.

A particular class of weak solutions to \eqref{NS} called \emph{suitable weak solutions} is introduced by L. Caffarelli, R. Kohn  and L. Nirenberg in their celebrated paper \cite{Caffarelli1982}. A simple proof is also given by F. Lin in \cite{Lin1998}; \crb{see Ladyzhenskaya and Seregin \cite{LS99} for revision.}
\begin{defn}
We say that  $(\bm{u}, \Pi)$ is a suitable weak solution of \eqref{NS} in an open domain $\Omega_{T}=\Omega \times(-T, 0)$, $T>0$, if
\begin{enumerate}
	\item[$(1)$] $\bm{u} \in L^{\infty}\left(-T, 0 ; L^{2}(\Omega)\right) \cap L^{2}\left(-T, 0 ; H^{1}(\Omega)\right)$ and $\Pi \in L^{\frac{3}{2}}\left(\Omega_{T}\right)$;
	\item[$(2)$] \eqref{NS} is satisfied in the sense of distributions;
	\item[$(3)$] the local energy inequality holds: for any nonnegative test function $\varphi \in C_{c}^{\infty}\left(\Omega_{T}\right)$  and  $ t \in (-T, 0)$,
	\begin{align}\label{local energy}
		&\int_{\Omega}|\bm{u}(x, t)|^{2} \varphi ~\text{d}x+2 \int_{-T}^{t} \int_{\Omega}|\nabla \bm{u}|^{2} \varphi ~\text{d}x\text{d}s \notag\\
		\leq& \int_{-T}^{t} \int_{\Omega}|\bm{u}|^{2}\left(\partial_{s} \varphi+\Delta \varphi\right)+\bm{u} \cdot \nabla \varphi\left(|\bm{u}|^{2}+2 \Pi\right) ~\textrm{d}x\textrm{d}s.
	\end{align}
\end{enumerate}
\end{defn}
We introduce the conventional notations in local regularity theory for suitable weak solution. Let $B(x_{0},R)$ be the ball in $\R^3$ with center at $x_{0}$ and radius $R$;  $Q(z_{0},R)=B(x_{0},R)\times(t_{0}-R^2,t_{0})$ with $z_{0}=(x_{0},t_{0})$; and $L^{p,q}(Q(z_{0},R))=L^{q}\left(t_{0}-R^2,t_{0};L^{p}(B(x_{0},R))\right)$. For a given solution $(\bm{u}, \Pi)$, let
\begin{align}
	&A(z_{0},R)=\sup_{t_{0}-R^{2}<t<t_{0}} \frac1R \int_{B(x_{0},R)}|\bm{u}(x, t)|^{2} \dx, \quad 
	 E(z_{0},R)=\frac1R \iint_{Q(z_{0},R)}|\nabla \bm{u}|^{2} \dx\dt, \nonumber
	 \\  \label{AE.def}
	&C(z_{0},R)=R^{-2}\iint_{Q(z_{0},R)}|\bm{u}|^3\dx\dt, \quad D(z_{0},R)=R^{-2}\iint_{Q(z_{0},R)}|\Pi|^{\frac{3}{2}}\dx\dt,\\
	&\E(z_{0},R)=A(z_{0},R)+E(z_{0},R)+D(z_{0},R). \nonumber
\end{align}
S. Gustafson, K. Kang and T. Tsai \cite{Gustafson2007} proved  general  $\varepsilon$-regularity criteria for Navier--Stokes equations. They showed that a suitable weak solution $(\bm{u},\Pi)$ is regular at $z_{0}$ if for some small $\varepsilon>0$,
\begin{align}\label{cmp}
\limsup_{R\rightarrow0}\, R^{1-\frac{3}{p}-\frac{2}{q}}\|\bm{u}\|_{L^{p,q}(Q(z_{0},R))}\leq \varepsilon,
\end{align}
where $\frac{3}{p}+\frac{2}{q}\leq 2$.

In our standing assumption, it is supposed that a suitable weak solution $\left(\bm{u},\Pi\right)$ of  Navier--Stokes equations \eqref{NS} is axially symmetric with respect to the axis $x_{3}$. It means that in the corresponding cylindrical coordinate system,
\begin{equation}\label{u}
	\bm{u}(x,t)=u^{r}(r,x_{3},t)\bm{e}_{r}+u^{\theta}(r,x_{3},t)\bm{e}_{\theta}+u^{3}(r,x_{3},t)\bm{e}_{3},\quad \Pi(x,t)=\Pi(r,x_{3},t)
\end{equation}
where $x=\left(x_{1},x_{2},x_{3}\right),r=\sqrt{x_{1}^{2}+x_{2}^{2}}$, and
\begin{equation*}
	\bm{e}_{r}=\left(\frac{x_{1}}{r},\frac{x_{2}}{r},0\right),~\bm{e}_{\theta}=\left(-\frac{x_{2}}{r},\frac{x_{1}}{r},0\right),~\bm{e}_{3}=(0,0,1).
\end{equation*}
The angular velocity $u^{\theta}$ is usually called the swirl.
For the axisymmetric solution $\left(\bm{u},\Pi\right)$, we can equivalently reformulate \eqref{NS} as
\begin{equation}\label{ANS}
	\left\{
	\begin{aligned}
		&\partial_{t}u^{r}+\left(\bm{b}\cdot\nabla\right)u^{r}-\left(\Delta-\frac{1}{r^{2}}\right)u^{r}-\frac{(u^{\theta})^{2}}{r}+\partial_{r}\Pi=0,\\
		&\partial_{t}u^{\theta}+\left(\bm{b}\cdot\nabla\right)u^{\theta}-\left(\Delta-\frac{1}{r^{2}}\right)u^{\theta}+\frac{u^{\theta}u^{r}}{r}=0,\\
		&\partial_{t}u^{3}+\left(\bm{b}\cdot\nabla\right)u^{3}-\Delta u^{3}+\partial_{3}\Pi=0,\\
		&\nabla\cdot\bm{b}=0,
	\end{aligned}
	\right.
\end{equation}
where $\bm{b}=u^{r}\bm{e}_{r}+u^3\bm{e}_{3}$. Define the quantity $\Gamma=ru^{\theta}$, which satisfies
\begin{align}\label{Gamma}
	\partial_{t} \Gamma+\left(\bm{b}\cdot\nabla\right)\Gamma-\left(\Delta-\frac{2}{r}\partial_{r}\right)\Gamma=0.
\end{align}
The global well-posedness was firstly investigated under no swirl assumption (i.e. $u^{\theta}=0$), independently by O. Ladyzhenskaya \cite{Ladyzhenskaya1968} and M. Ukhovskii and V. Yudovich \cite{Ukhovskii1968}, see also Leonardi etc. \cite{Leonardi1999} for a refined proof. When the swirl $u^{\theta}$ is not trivial, it is still open. There are many papers on regularity of axisymmetric solutions, please refer to \cite{Chen2009,Chen2008a,Chen2017,Chen2019,Koch2009,Lei2011a,Lei2017,Palasek2021,Pan2016,Seregin2020,Seregin2021,Wei2016} and the references therein. For the convenience of readers, we list some related results here. 

In regard to the regularity criteria only involving swirl component $u^\theta$, one of the primary results is given by H. Chen, D. Fang and T. Zhang \cite{Chen2017}: the solution $\bm{u}$ is smooth in $(0,T)$, provided that
\begin{equation}
	r^{d} u^{\theta} \in L^{q}((0,T);L^{p}(\R^{3})),
\end{equation}
where $\frac{2}{q}+\frac{3}{p}\leq1-d,~0\leq d<1,~\frac{3}{1-d} < p\leq \infty, ~\frac{2}{1-d} \leq q\leq\infty$. Z. Lei and Q. Zhang \cite{Lei2017} obtained the regularity of the solution under the condition
\begin{equation}\label{Lei}
	r|u^{\theta}| \leq  N |\ln{r}|^{-2},\quad 0<r\leq\frac{1}{2},
\end{equation}
here $N$ is the general constant throughout this paper. D. Wei \cite{Wei2016} improved it to 
\begin{equation}\label{Wei}
	r|u^{\theta}| \leq  N |\ln{r}|^{-\frac{3}{2}},\quad 0<r\leq\frac{1}{2}.
\end{equation}
Chen, Strain, Yau and Tsai \cite{Chen2008a,Chen2009} and Koch, Nadirashvili, Seregin and Sverak \cite{Koch2009} proved that the suitable weak solution is smooth if the solution $\bm{u}$ satisfies 
\begin{align}\label{chen}
|\bm{u}|\leq N r^{-1+\varepsilon}t^{-\frac{\varepsilon}{2}},
\end{align}
where $0\leq\varepsilon\leq1$. Z. Lei and Q. Zhang \cite{Lei2011a} obtained a similar results if
\begin{align}\label{lei}
\bm{b}\in L^\infty((0,T);BMO^{-1}(\R^3)).
\end{align}
A local regularity condition for axisymmetric solutions is proved by G. Seregin \cite[Theorem 2.1]{Seregin2020}, which says that suitable weak solution $(\bm{u},\Pi)$ is regular at $z_{0}=(0,x_{3},t)$ if
\begin{align}\label{seregin1}
\min\left\{\limsup_{R\rightarrow0}A(z_{0},R),\limsup_{R\rightarrow0}E(z_{0},R),\limsup_{R\rightarrow0}C(z_{0},R)\right\}<\infty.
\end{align}
Compared with the $\varepsilon-$regularity \eqref{cmp}, 
the one here in axisymmetric system does not need the  smallness of those scale-invariant energy quantities. It reads that the axisymmetric solution have no Type I singulaities.

Recently, X. Pan \cite[Theorem 1.1]{Pan2016} proved the regularity of the solution under a slightly supercritical assumption
\begin{align}\label{pan}
r|\bm{u}|\leq N \left(\ln\ln \frac{100}{r}\right)^{0.028},\quad 0<r\leq \frac{1}{2}.
\end{align}
G. Seregin \cite{Seregin2021} obtained another type of supercritical regularity criterion:
\begin{align}\label{seregin2}
R^{-\frac{1}{2}}\|\bm{u}\|_{L^{3,4}(Q(z_{0},R))}+R^{-\frac{1}{2}}\|\bm{u}\|_{L^{\frac{10}{3}}(Q(z_{0},R))}\leq N \left(\ln\ln \frac{100}{R}\right)^{\frac{1}{224}},
\end{align}
for any $0<R<1$ and $z_{0}=(0,x_{0,3},t_{0})\in \R^3\times(0,T)$. The key ingredient is that under the assumption \eqref{pan} or \eqref{seregin2}, the bound for  oscillation of  $\Gamma(x,t)$ contained a logarithmic factor.

Using the strategy of quantitative estimates introduced by T. Tao \cite{Tao2019}, S. Palasek \cite[Theorem 2]{Palasek2021} showed that if the axisymmetric  solution $\bm{u}$ first blows-up at $T_{*}>0$ and $2<p\leq3$, then for a constant $\alpha>0$ depending only on $p$,
\begin{align}\label{palasek}
	\limsup_{t \rightarrow T_{*}}\frac{\|r^{1-\frac{3}{p}} \bm{u}\|_{L^{p}(\R^3)}}{\left(\ln \ln \frac{1}{T_{*}-t}\right)^{\alpha}}=\infty .
\end{align}

Let $Q(r) = Q(0,r)$ for $0\in \R^4$ and $r>0$.
Let  the weight 
\[
\omega(R)=\left(\ln\ln \frac{100}{R}\right)^{-1}.
\]

In this paper, we obtain \crb{the following key} proposition.
\begin{prop}\label{lemA6}
	Assume that	$(\bm{u},\Pi)$ is an axisymmetric suitable weak solution to the Navier--Stokes equations in $Q(1)$ and there exist constants $\beta\in (0,\frac18)$ and $K>0$ such that
	\begin{align}\label{A}
		A(z_{0},R) \omega(R)^{\beta}\leq K,
	\end{align} 
	for all $0<R\leq \frac{1}{4}$, for some $z_{0}=(0,x_{0,3},t_{0})\in Q(\frac{1}{8})$. Then for any $0 <\tau<1$, there exists a constant $c=c(K,\beta,\tau)$  such that
	\begin{align}\label{OSC}
		\rm{OSC}_{(x,t)\in Q(z_{0},\rho)}\, \Gamma(x,t)\leq e^{-c\,\left(\left(\ln\frac{100}{\rho}\right)^{\tau}-\left(\ln \frac{100}{R}\right)^\tau-2\right)}\ \rm{OSC}_{(x,t)\in Q(z_{0},R)}\, \Gamma(x,t),
	\end{align}
	for $0<\rho<R\leq \frac{1}{4}$. Here the oscillation $\rm{OSC}_{(x,t)\in Q(z_{0},R)}\, \Gamma(x,t)=\sup_{(x,t)\in Q(z_{0},R)}\,\Gamma(x,t)-\inf_{(x,t)\in Q(z_{0},R)}\,\Gamma(x,t)$. 
	
	If in addition, assuming that
	\eqref{A} holds for \emph{all} $z_{0}=(0,x_{0,3},t_{0})\in Q(\frac{1}{8})$, then we have that for $0<r\leq \frac{1}{4}, |x_{3}|<\frac{1}{8},-\frac{1}{64}< t<0$,
	\begin{align}\label{lnr}
	|\Gamma(r,x_{3},t)|\leq N e^{-c\, |\ln r|^{\tau}}.
	\end{align}
\end{prop}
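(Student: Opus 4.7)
The plan is to establish a De Giorgi/Moser-type oscillation decrement for $\Gamma$ at each dyadic scale centered on the symmetry axis and then iterate. The equation \eqref{Gamma} is a linear drift--diffusion for $\Gamma$ with divergence-free three-dimensional drift $\bm{b}$ and the first-order term $-\frac{2}{r}\partial_r$, which is singular on the axis. A key algebraic identity is
\[
\Delta - \frac{2}{r}\partial_r \;=\; r^{2}\,\text{div}\!\left(r^{-2}\nabla\,\cdot\,\right),
\]
so the diffusion operator is self-adjoint with respect to the weighted measure $d\mu=r^{-2}\,dx$. Since \eqref{Gamma} is linear in $\Gamma$, the shifted quantity $\Gamma-k$ satisfies the same equation for any constant $k$. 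Multiplying \eqref{Gamma} by $\varphi^{2}(\Gamma-k)_{\pm}$ with a parabolic cutoff $\varphi\in C_{c}^{\infty}(Q(z_{0},R))$, integrating against $r^{-2}\,dx\,dt$, and using $\Gamma|_{r=0}=0$ to absorb the on-axis boundary contribution (by choosing $k$ so the truncation vanishes there), I would derive a Caccioppoli-type inequality of the form
\[
\sup_{t}\int \varphi^{2}(\Gamma-k)_{\pm}^{2}\,d\mu \;+\; \iint \varphi^{2}|\nabla(\Gamma-k)_{\pm}|^{2}\,d\mu\,dt \;\lesssim\; \iint (\Gamma-k)_{\pm}^{2}\bigl(|\partial_{t}\varphi|+|\nabla\varphi|^{2}+|\bm{b}||\nabla\varphi|\bigr)d\mu\,dt.
\]

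From this energy inequality I would carry out a one-step oscillation decrement. Let $M=\sup_{Q(z_{0},R)}\Gamma$ and $m=\inf_{Q(z_{0},R)}\Gamma$, and assume (WLOG, passing to $-\Gamma$ otherwise) that the set $\{\Gamma\leq (M+m)/2\}$ occupies at least half of $Q(z_{0},R/2)$. A De Giorgi first-lemma/Moser iteration on the nonnegative supersolution $v=M-\Gamma$, using the Caccioppoli inequality above together with the weighted parabolic Sobolev embedding natural for $d\mu$, yields
\[
\sup_{Q(z_{0},R/4)}\Gamma \;\leq\; M-\eta(R)(M-m),\qquad \text{hence}\qquad \mathrm{OSC}_{Q(z_{0},R/4)}\Gamma \;\leq\; (1-\eta(R))\,\mathrm{OSC}_{Q(z_{0},R)}\Gamma.
\]
The drift input is controlled by \eqref{A}: $\sup_{t}\int_{B(z_{0},R)}|\bm{b}|^{2}dx\leq R\,A(z_{0},R)\leq KR\,\omega(R)^{-\beta}$, which is only mildly supercritical, and careful tracking of constants through the Moser iteration gives $\eta(R)\geq c\,\omega(R)^{\beta}$ for $c=c(K,\beta)>0$.

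Iterating across dyadic scales $R_{j}=R\cdot 4^{-j}$ for $j=0,\dots,n$ with $n\sim\log_{4}(R/\rho)$ produces
\[
\mathrm{OSC}_{Q(z_{0},\rho)}\Gamma \;\leq\; \exp\!\Bigl(-c\sum_{j=0}^{n-1}\omega(R_{j})^{\beta}\Bigr)\,\mathrm{OSC}_{Q(z_{0},R)}\Gamma.
\]
Since $\omega(R_{j})^{\beta}$ behaves like $(\ln j)^{-\beta}$ for large $j$, integral comparison with $\int_{1}^{n}(\ln s)^{-\beta}ds$ gives $\sum_{j}\omega(R_{j})^{\beta}\gtrsim n^{\tau}$ for any $\tau\in(0,1)$ once $n$ is large (because $n^{1-\tau}\geq (\ln n)^{\beta}$). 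Combined with $n\sim\ln(100/\rho)-\ln(100/R)$ and the elementary inequality $a^{\tau}-b^{\tau}\leq(a-b)^{\tau}$ for $\tau\in(0,1)$, $a\geq b\geq 0$, this yields \eqref{OSC}, with finitely many small-$j$ terms and the transition range absorbed into the additive $-2$ inside the exponent.

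For the pointwise bound \eqref{lnr}, I would fix $(r,x_{3},t)$ with $0<r\leq \tfrac14$, $|x_{3}|<\tfrac18$, $-\tfrac{1}{64}<t<0$ and set $z_{0}=(0,x_{3},t)\in Q(\tfrac18)$. Since $(r,x_{3},t)\in Q(z_{0},r)$ and $\Gamma\equiv 0$ on the axis, $|\Gamma(r,x_{3},t)|=|\Gamma(r,x_{3},t)-\Gamma(0,x_{3},t)|\leq \mathrm{OSC}_{Q(z_{0},r)}\Gamma$; applying \eqref{OSC} with $\rho=r$, $R=\tfrac14$ and bounding $\mathrm{OSC}_{Q(z_{0},1/4)}\Gamma$ by a constant $N$ depending only on $K$ (via a single Moser $L^{\infty}$ step, or the maximum principle for \eqref{Gamma}) yields \eqref{lnr} after absorbing the fixed constant $(\ln 400)^{\tau}$. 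The main obstacle is the one-step decrement: turning the mildly supercritical hypothesis \eqref{A} into the concrete lower bound $\eta(R)\gtrsim \omega(R)^{\beta}$. This requires a weighted parabolic De Giorgi/Moser argument on a cylinder touching the axis, where one must simultaneously handle the singular factor $r^{-2}$ in the natural measure and a drift that misses the critical scaling by an $\omega(R)^{-\beta}$ factor, without losing more than a power of $\omega(R)$ in the decrement.
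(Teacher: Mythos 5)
Your overall strategy (a one-step oscillation decrement for $\Gamma$ at each dyadic scale, followed by iteration and summation of the decrements) is exactly the architecture of the paper's proof, and your treatment of the final summation and of the pointwise bound \eqref{lnr} from \eqref{OSC} is fine. The problem is the crux of the argument, which you assert rather than prove: the claim that ``careful tracking of constants through the Moser iteration gives $\eta(R)\geq c\,\omega(R)^{\beta}$.'' This is not what the method delivers, and it is almost certainly false at this strength. In the measure-to-pointwise step of De Giorgi/Moser (equivalently, in the paper's Lemmas \ref{lem2.3}--\ref{lemA.5}), the weak Harnack inequality controls $-\int \ln h\,\zeta_R^2\,dx$ by $N(1+A(R))^{3}$, so Chebyshev only gives $|\{h\le\lambda\}|\lesssim R^3(1+A(R))^3/(-\ln\lambda)$, and the subsequent local maximum estimate forces the constraint $-\ln\lambda\gtrsim (1+A(R))^{8}$ before the decrement $\lambda$ is usable. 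Under \eqref{A} one has $(1+A(R))^{8}\sim (\ln\ln\frac{100}{R})^{8\beta}$, so the admissible decrement is at most of size $e^{-C(\ln\ln\frac{100}{R})^{8\beta}}$ --- the paper takes $\lambda(R)=N_7(\ln\frac{100}{R})^{\tau-1}$, which satisfies this precisely because $8\beta<1$. Your claimed $\eta(R)\sim(\ln\ln\frac{100}{R})^{-\beta}$ is exponentially larger than this threshold; the dependence of the decrement on the drift/energy bound $A(R)$ is exponential through $-\ln\lambda$, not polynomial. Indeed, if your decrement were correct, summation would give $|\Gamma|\lesssim e^{-c|\ln r|(\ln|\ln r|)^{-\beta}}$, strictly stronger than the proposition and than all known results of this type; that should have been a red flag that the one-step bound needs a genuine proof, which is missing.

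Two further structural points. First, your proposed energy framework uses the measure $d\mu=r^{-2}dx$ (correctly identifying $\Delta-\tfrac2r\partial_r=r^2\,\mathrm{div}(r^{-2}\nabla\cdot)$), but this measure is not locally finite near the axis ($r^{-2}dx = r^{-1}dr\,d\theta\,dx_3$), so the ``weighted parabolic Sobolev embedding natural for $d\mu$'' and the Caccioppoli inequality you write down are not available as stated. The paper instead works with Lebesgue measure, cuts off at $r\ge\varepsilon$, and integrates $\tfrac2r\partial_r$ by parts explicitly; the resulting axis boundary term $4\pi\int h|_{r=0}\,dx_3$ is not an error term but the engine of the initial lower bound (Lemma \ref{lemA2}). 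Second, your ``WLOG the sublevel set has measure at least half'' alternative must be reconciled with the requirement that the truncation vanish on the axis: since $\Gamma|_{r=0}=0$, the correct dichotomy (used in \eqref{eqA.10}) is to choose the normalization for which $h\ge 1$ on the axis, and the largeness of the good set then comes from the axis itself, not from a measure-theoretic alternative. Neither of these is fatal, but the unproved decrement is a genuine gap that invalidates the proof as written.
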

It should be pointed out that decay in \eqref{lnr}  is an improvement of the result in \cite[Theorem 1.2]{Pan2016}. 

\crb{\textit{Remark.} When this paper was near completion, Professor Seregin posted a similar result \cite{Seregin2021} to arXiv. His assumption is still \eqref{seregin2} and his decay exponent $\tau$ is $1/4$. Our $\tau$ is limited to $\tau<1$, weaker than the H\"older continuity case $\tau=1$.}  

\crb{The following theorems are corollaries of Proposition \ref{lemA6}.}

Fix $1< p,q \leq \infty$ with $\frac{3}{p}+\frac{2}{q}=2-\gamma,\ 0<\gamma<1$. 
Denote 	
\begin{equation}\label{GR.def}
G(z_{0},R)=R^{1-\frac{3}{p}-\frac{2}{q}}\|\bm{b}\|_{L^{p,q}(Q(z_{0},R))} ,\quad  G_{\alpha}(z_{0},R)=G(z_{0},R)\crb{\omega(R)^{\alpha}},
\end{equation}
 where  $0<\alpha<\alpha_{0}= \frac{\gamma}{48+16\gamma}$. 
 
\begin{thm}\label{thm1}
Assume that	$(\bm{u},\Pi)$ is an axisymmetric suitable weak solution to the Navier--Stokes equations in $Q(1)$. If there exist a positive constant $G$  such that
\begin{align}\label{G}
 G_{\alpha}(z_{0},R) \leq G,
\end{align}
for all $z_{0}=(0,x_{0,3},t_{0})\in Q(\frac{1}{8})$ and $0<R\leq \frac{1}{4}$, then the solution is regular at $(0,0)$.
\end{thm}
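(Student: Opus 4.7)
The plan is to use Proposition \ref{lemA6} as the engine: once we establish that $G_\alpha(z_{0},R)\leq G$ propagates to the weighted bound $A(z_{0},R)\omega(R)^{\beta}\leq K$ required by the Proposition, the exponential decay \eqref{lnr} of $\Gamma$ follows immediately, and this decay is strong enough to close the argument by a known swirl-based criterion. Concretely, since $e^{-c|\ln r|^{\tau}}$ decays faster than any negative power of $|\ln r|$, Proposition \ref{lemA6} gives in particular $r|u^{\theta}|\leq N|\ln r|^{-3/2}$ for $r$ small, and then the Lei--Zhang / Wei criterion \eqref{Wei} forces $\bm{u}$ to be regular at $(0,0)$.

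The main work is therefore the passage from the $L^{p,q}$ control of the poloidal part $\bm{b}$ to a weighted bound on the full local energy $A$. My plan is to write the local energy inequality \eqref{local energy} with an axisymmetric test function $\varphi$ supported in $Q(z_{0},R)$. Because $\bm{e}_{\theta}\cdot\nabla\varphi=0$ for axisymmetric $\varphi$, the transport nonlinearity collapses to $\bm{b}\cdot\nabla\varphi\,(|\bm{u}|^{2}+2\Pi)$, so only the poloidal part $\bm{b}$ enters the dangerous term. Combining H\"older's inequality in the nonlinear term (using $\|\bm{b}\|_{L^{p,q}(Q(z_{0},R))}\leq G\,R^{-1+3/p+2/q}\omega(R)^{-\alpha}$) with standard Caffarelli--Kohn--Nirenberg pressure estimates for $D(z_{0},R)$, I expect to derive an iterative inequality of the schematic form
\begin{equation*}
\E(z_{0},\rho)\leq \Theta(R/\rho)\bigl[\,G_{\alpha}(z_{0},R)^{a}\,\E(z_{0},R)^{1-a}+\text{lower order}\,\bigr],\qquad 0<\rho<R\leq \tfrac14,
\end{equation*}
for some $a\in(0,1)$ dictated by $\gamma$ and by the scaling $\frac{3}{p}+\frac{2}{q}=2-\gamma$. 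The logarithmic damping $\omega(R)^{\alpha}$ inside $G_{\alpha}$ is what compensates for the missing critical exponent $\gamma$.

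With this in hand I would iterate between scales $R_{n}=\theta^{n}R$ for a suitably small $\theta\in(0,1)$ to show that $\E(z_{0},R)$ grows at most like $\omega(R)^{-\beta}$ for a $\beta\in(0,\tfrac18)$ chosen in terms of $\alpha$ and the upper bound $\alpha_{0}=\gamma/(48+16\gamma)$ in the statement; this gives $A(z_{0},R)\omega(R)^{\beta}\leq K$ simultaneously for every $z_{0}=(0,x_{0,3},t_{0})\in Q(\tfrac18)$. Plugging into the second half of Proposition \ref{lemA6} yields $|\Gamma(r,x_{3},t)|\leq Ne^{-c|\ln r|^{\tau}}$ on the prescribed domain for any $\tau<1$. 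Finally, fixing any $\tau\in(0,1)$ and using that $e^{-c|\ln r|^{\tau}}=o(|\ln r|^{-3/2})$ as $r\to 0$, the Wei criterion \eqref{Wei} (or equivalently \eqref{Lei}) is satisfied in a neighborhood of the axis, and we conclude regularity of $(\bm{u},\Pi)$ at $(0,0)$.

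The principal obstacle I foresee is Step 2/3, namely making the iterative energy estimate truly close in the slightly supercritical regime $\gamma>0$. In a purely scale-invariant setting ($\gamma=0$) the bootstrap is classical, but here one must track the $\omega^{\alpha}$ factor through the iteration and quantify precisely how the exponent $\alpha$ in the hypothesis converts into the exponent $\beta$ of the conclusion; the constraint $\alpha<\alpha_{0}=\gamma/(48+16\gamma)$ in the statement suggests that the pressure term and the cubic term in the local energy inequality must be balanced very carefully, with explicit accounting of numerical constants, before Proposition \ref{lemA6} can be invoked.
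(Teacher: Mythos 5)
Your overall route is the same as the paper's: convert the hypothesis $G_\alpha(z_0,R)\le G$ into a weighted bound $\E(z_0,R)\,\omega(R)^\beta\le K$ by iterating the local energy inequality across scales (this is the content of Lemma \ref{lemA7}, built on Lemmas \ref{lem1}, \ref{lem2} and \ref{lem3}), feed the resulting bound on $A$ into Proposition \ref{lemA6} to get the decay of $\Gamma$, and close with a swirl criterion of Lei--Zhang/Wei type. So the architecture is correct.

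There is, however, one concrete gap in your energy step. You assert that because $\bm{e}_\theta\cdot\nabla\varphi=0$ the transport term collapses to $\bm{b}\cdot\nabla\varphi\,(|\bm{u}|^2+2\Pi)$ and hence ``only the poloidal part $\bm{b}$ enters the dangerous term.'' The advecting field is indeed $\bm{b}$, but the advected density $|\bm{u}|^2$ still contains $(u^\theta)^2$, and the quantity $C(z_0,R)$ driving the pressure estimate of Lemma \ref{lem3} involves the full $|\bm{u}|^3$. The hypothesis \eqref{G} controls only $\bm{b}$, not $u^\theta$, so H\"older against $\|\bm{b}\|_{L^{p,q}}$ alone cannot close the iteration: you must separately bound $\iint |u^\theta|^3$. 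The paper does this in Lemma \ref{lem2} by writing $|u^\theta|^3=|\Gamma|^{3/2}|u^\theta/r|^{3/2}$, invoking the local maximum estimate for $\Gamma$ (Lemma \ref{lemA1}) at the unit scale to get $\|\Gamma\|_{L^\infty}\lec \E(1)^{3}$, and using $\iint|u^\theta/r|^{3/2}\lec R^2E(z_0,R)^{3/4}$; this produces the extra term $N\E(1)^{9/2}(R/\rho)^2E(z_0,R)^{3/4}$, whose sublinear power $3/4<1$ is what allows the iteration to absorb it. Without this ingredient (or some substitute control of the swirl) your schematic inequality does not follow from the stated hypothesis. The remaining steps --- the $\omega^\alpha\to\omega^\beta$ bookkeeping with $\alpha=\frac{\gamma}{6+2\gamma}\beta$, and the final appeal to \eqref{Wei} in place of the paper's use of \eqref{Lei} --- are as in the paper and are fine.
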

The constant $\alpha_{0}=\frac{\gamma}{48+16\gamma}$ is not optimal and may be improved if choosing $p,q$ specifically. We leave it to the interested readers. 
\crb{The regularity criterion \eqref{G} would imply criteria} \eqref{pan}, \eqref{seregin2} and \eqref{palasek}, if one \crb{could increase the exponent} $\alpha$. 

More supercritical regularity criteria for the stream function $\psi^{\theta}$ with $\bm{b}=\nabla\times\left(\psi^{\theta}\bm{e}_{\theta}\right)$ or $\omega^{\theta}=\partial_{3}u^{r}-\partial_{r}u^{3}$ or one component $u^{3}$ can be obtained, if we make some minor modifications to Lemma \ref{lem2}, see \cite{Chen2017,Gustafson2007}. We also leave it to the interested readers. Here we give a regularity criterion for $\bm{b}$ in a weaker critical space $\dot{B}_{\infty,\infty}^{-1}(\R^3)$. To avoid unessential issues, we consider the problem of regularity in the whole space $\R^3$.
\begin{thm}\label{thm2}
Let $(\bm{u},\Pi)$ be a classical axisymmetric solution to Naiver--Stokes equations in $\R^3\times(-1,0)$ which blows up at time $t=0$. Then
\begin{align}\label{B}
\limsup_{t\rightarrow 0}\frac{\|\bm{b}(\cdot,t)\|_{\dot{B}_{\infty,\infty}^{-1}(\R^3)}}{\left(\ln\ln\frac{100}{-t}\right)^{\frac{1}{48}-}}=\infty,
\end{align}
where $a-$ is \crb{any} positive constant smaller than $a$. 
\end{thm}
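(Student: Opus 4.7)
The plan is to argue by contradiction and apply Theorem~\ref{thm1}. Suppose for some $\beta<\tfrac{1}{48}$ and $M>0$ that $\|\bm b(\cdot,t)\|_{\dot B^{-1}_{\infty,\infty}(\R^3)}\leq M(\ln\ln(100/(-t)))^{\beta}$ for $t$ sufficiently close to $0$. After an $x_3$-translation placing the blow-up point at $(0,0)$---admissible since in the axisymmetric setting the first singularity lies on the axis---the restriction of $(\bm u,\Pi)$ to $Q(1)$ is a suitable weak solution, and it suffices to verify the hypothesis $G_\alpha(z_0,R)\leq G$ of Theorem~\ref{thm1} uniformly in $z_0\in Q(1/8)$ and $R\in(0,1/4]$ for some admissible quadruple $(p,q,\gamma,\alpha)$; Theorem~\ref{thm1} then yields regularity at $(0,0)$, contradicting the blow-up.

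The central estimate is a Gagliardo--Nirenberg-type interpolation bridging the negative-regularity Besov hypothesis with the parabolic Lebesgue space $L^{p,q}(Q(z_0,R))$: for $3<p<6$,
\[
\|f\|_{L^{p}(\R^3)}^{p}\leq C\|\nabla f\|_{L^{2}(\R^3)}^{\,2(p-3)}\|f\|_{\dot B^{-1}_{\infty,\infty}(\R^3)}^{\,6-p},
\]
whose exponents are forced by balancing the scaling of the three norms against the critical $L^3$ level. I would apply this pointwise in $t$ to a cutoff $\phi\bm b$ with $\phi\in C_c^\infty(B(x_0,2R))$ equal to one on $B(x_0,R)$, raise to the power $q/p$ with $q=p/(p-3)$ (so that the $\|\nabla f\|_{L^2}$ factor enters squared in the time integral), and invoke the Caffarelli--Kohn--Nirenberg local energy inequality to bound the integrated dissipation by $R(A(z_0,2R)+E(z_0,2R))$.

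With $3/p+2/q=2-3/p$ (hence $\gamma=3/p\in(1/2,1)$), the $R$-factors cancel in $G(z_0,R)=R^{\gamma-1}\|\bm b\|_{L^{p,q}(Q(z_0,R))}$, yielding
\[
G(z_0,R)\lec \omega(R)^{-\beta(6-p)/p}\bigl(A(z_0,2R)+E(z_0,2R)\bigr)^{1/q}.
\]
Controlling $A+E$ by the Leray global energy (sharpened where necessary by a bootstrap through Proposition~\ref{lemA6}) reduces $G_\alpha(z_0,R)\leq G$ to a linear inequality in $\beta$ and $\gamma$, which---combined with the Theorem~\ref{thm1} constraint $\alpha<\alpha_0=\gamma/(48+16\gamma)$ and optimized over the admissible range---produces the threshold $\beta<\tfrac{1}{48}$ of~\eqref{B}. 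The numerical value $48$ is thereby inherited directly from the denominator of $\alpha_0$.

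The main technical obstacle is the cutoff-localization of the Gagliardo--Nirenberg inequality, since $\dot B^{-1}_{\infty,\infty}$ is intrinsically a global, negative-regularity object. This is handled via a multiplier estimate of the schematic form $\|\phi f\|_{\dot B^{-1}_{\infty,\infty}}\lec \|\phi\|_{\mathrm{Lip}}\|f\|_{\dot B^{-1}_{\infty,\infty}}+R^{-1}\|f\|_{L^2(B_{2R})}$, whose low-order $L^2$-term is absorbed into the $A(z_0,2R)$ contribution of the local energy inequality. Once this localization is rigorously in place, the remaining exponent bookkeeping is algebraic.
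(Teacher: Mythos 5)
Your overall strategy --- contradiction plus an interpolation between $\dot B^{-1}_{\infty,\infty}$ and the local energy norms --- is in the right spirit, but your specific route (verifying the hypothesis of Theorem \ref{thm1}) is not the paper's and, as written, does not close. The paper does not pass through Theorem \ref{thm1} at all: it uses the local interpolation inequality \eqref{eq3.4}, $\|\bm b\|_{L^4(B(x_0,R/2))}\lec\|\bm b\|_{\dot B^{-1}_{\infty,\infty}}^{1/2}(\|\nabla\bm b\|_{L^2(B(x_0,R))}+R^{-1}\|\bm b\|_{L^2(B(x_0,R))})^{1/2}$ (the $p=4$ case of your Gagliardo--Nirenberg family, already localized, so no cutoff multiplier estimate is needed), to prove Lemma \ref{lem2'}: $C(z_0,\rho)\lec(R/\rho)^2(B(t_0,R)^{3/2}+\E(1)^{9/2})\E(z_0,R)^{3/4}$ with $B(t_0,R)=R^{-1/3}\|\bm b\|_{L^6(t_0-R^2,t_0;\dot B^{-1}_{\infty,\infty})}$. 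The crucial point is the exponent $3/4<1$ on $\E(z_0,R)$, which lets the Caffarelli--Kohn--Nirenberg-type iteration of Lemma \ref{lemA10} absorb the energy quantities and produce the a priori bound $A(z_0,R)\omega(R)^{\beta}\leq K$; Proposition \ref{lemA6} then gives $|\Gamma|\leq N|\ln r|^{-2}$ and regularity follows as in \cite{Lei2017}. The threshold $\tfrac{1}{48}=\tfrac16\cdot\tfrac18$ comes from the $L^6$-in-time exponent in $B(t_0,R)$ combined with the constraint $\beta<\tfrac18$ of Proposition \ref{lemA6}, not from the denominator of $\alpha_0$ in Theorem \ref{thm1}.

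The genuine gap in your proposal is the step ``controlling $A+E$ by the Leray global energy.'' The quantities $A(z_0,2R)$ and $E(z_0,2R)$ carry the scale-invariant weight $R^{-1}$ (see \eqref{AE.def}), so the global energy only gives $A(z_0,2R)+E(z_0,2R)\lec R^{-1}$, which destroys the cancellation of powers of $R$ in $G(z_0,R)$. Without an a priori bound on $A+E$ your estimate $G(z_0,R)\lec\omega(R)^{-\beta(6-p)/p}(A+E)^{1/q}$ is circular: the hypothesis of Theorem \ref{thm1} is being verified in terms of the very quantities that Theorem \ref{thm1} (via Lemma \ref{lemA7}) is supposed to control, and the fallback ``bootstrap through Proposition \ref{lemA6}'' cannot start because that proposition itself assumes $A(z_0,R)\omega(R)^{\beta}\leq K$. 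One could try to repair this by substituting your bound for $G$ into the iteration of Lemma \ref{lemA7} and checking that the total power of $\E(z_0,R)$ on the right stays below $1$; that works only for $\gamma=3/p$ close to $1$ and, when the exponents are tracked, does not reproduce the threshold $\tfrac1{48}$ --- your own constraint $\beta(2\gamma-1)<\gamma/(48+16\gamma)$ degenerates as $\gamma\to\tfrac12$ and gives $\tfrac1{64}$ as $\gamma\to1$. So the exponent bookkeeping, which you defer as ``algebraic,'' is in fact where the argument must be redone along the lines of Lemmas \ref{lem2'} and \ref{lemA10}.
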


If $\alpha=0$, we can deduce \crb{the following result}, which covers the regularity criteria \eqref{chen} and \eqref{seregin1}. It proof is directly from Lemma \ref{lemA7} and regularity criterion \eqref{seregin1}.

\begin{thm}\label{thm3}
	Assume that	$(\bm{u},\Pi)$ is an axisymmetric suitable weak solution to the Navier--Stokes equations in $Q(1)$. The solution is regular at $z_{0}$, provided that
	\begin{align}
		\limsup_{R\rightarrow0}\,G(z_{0},R) <\infty.
	\end{align}
\end{thm}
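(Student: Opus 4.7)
The plan, as the authors indicate, is to chain two ingredients already in the paper: Lemma \ref{lemA7} and Seregin's axisymmetric local regularity criterion \eqref{seregin1}. The hypothesis $\limsup_{R\to 0}G(z_0,R)<\infty$ only gives a supercritical $L^{p,q}$ bound on the poloidal drift $\bm{b}=u^r\bm{e}_r+u^3\bm{e}_3$ (since $3/p+2/q=2-\gamma<2$), and the task is to upgrade this into finiteness of one of the scale-invariant energy quantities $A$, $E$, $C$ associated to the full velocity $\bm{u}$.

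The first step is to fix the axial point $z_0=(0,x_{0,3},t_0)$, set $G_*:=\limsup_{R\to 0}G(z_0,R)<\infty$, and invoke Lemma \ref{lemA7} in the $\alpha=0$ regime. I expect this lemma to yield a uniform bound of the shape
\[
\limsup_{R\to 0}\,\E(z_0,R)\le N(G_*),
\]
and in particular $\limsup_{R\to 0}A(z_0,R)<\infty$. The mechanism inside Lemma \ref{lemA7} should be a Caffarelli--Kohn--Nirenberg style iteration applied to the local energy inequality \eqref{local energy}: test against a cutoff adapted to $Q(z_0,R)$, decompose $\bm{u}=\bm{b}+u^\theta\bm{e}_\theta$ in the cubic convective term, use H\"older in $L^{p,q}$ together with the standard energy interpolation to absorb the $\bm{b}$ contribution by $G(z_0,\rho)$ times $\E(z_0,\rho)$ at a slightly larger radius $\rho$, and exploit axisymmetry (the equation \eqref{Gamma} for $\Gamma=ru^\theta$ and the pointwise bound $|u^\theta|\le|\Gamma|/r$) to handle the swirl part; the factor $R^{c\gamma}$ coming from the subcritical scaling $3/p+2/q<2$ is what permits the iteration to close.

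With $\limsup_{R\to 0}A(z_0,R)<\infty$ in hand, the axial regularity criterion \eqref{seregin1} of Seregin applies immediately and yields that $(\bm{u},\Pi)$ is regular at $z_0$, which finishes the argument.

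The main obstacle is entirely located in Lemma \ref{lemA7}: the drift bound on $\bm{b}$ completely ignores the swirl, so one cannot directly estimate $\int|\bm{u}|^3$ by $G$. Bringing the axisymmetric structure into the local energy estimate while keeping the small subcritical gain needed to close the iteration is the crux, and is presumably where the restriction $0<\alpha<\alpha_0=\gamma/(48+16\gamma)$ appearing in Theorem \ref{thm1} originates; for Theorem \ref{thm3} we only need the cruder $\alpha=0$ conclusion, for which no logarithmic weight is required.
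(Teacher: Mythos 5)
Your proposal is correct and is exactly the paper's argument: apply Lemma \ref{lemA7} with $\alpha=\beta=0$ to convert the hypothesis $\limsup_{R\to0}G(z_{0},R)<\infty$ into a uniform bound on $\E(z_{0},R)$ (hence on $A(z_{0},R)$) for small $R$, and then conclude via Seregin's axial criterion \eqref{seregin1}. Your description of the internal mechanism of Lemma \ref{lemA7} (local energy inequality, splitting $\bm{u}=\bm{b}+u^{\theta}\bm{e}_{\theta}$, handling the swirl via the maximum principle for $\Gamma$) also matches Lemmas \ref{lem1}--\ref{lem3} of the paper.
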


\section{Proof of Proposition \ref{lemA6}}
In this section, we prove Proposition \ref{lemA6} by a De Giorgi-Nash-Moser type method. Without loss of generality, we assume $z_{0}=(0,0)$. 

Let $B(R)=B(0,R)$, $Q(R)=Q(0,R)$ and $A(R)=A(0,R)$ be as defined in \eqref{AE.def}. Let $(\bm{u},\Pi)$ be an axisymmetric suitable weak solution to the Navier--Stokes equations \eqref{NS} in $Q(1)$. One of the important progress in \cite{Caffarelli1982} is that one-dimensional Hausdorff measure of the possible space-time singular points set for the suitable weak solution is zero. This implies that all possible singular points of the axisymmetric suitable weak solution $(\bm{u},\Pi)$ lie on the symmetry axis $r=0$, hence in the set $S=\{(x,t)\in Q(1)\,|\,r=0\}$. 

\begin{lem}[local maximum estimate]\label{lemA1}
For $0<R< 1$,
\begin{align} \label{eqA.1}
\sup_{Q(\frac12 R)\backslash S} |\Gamma| \leq N \left(\frac{1+A(R)}{R}\right)^{\frac{5}{2}}  \|\Gamma\|_{L^{2}(Q(R))}.
\end{align}
\end{lem}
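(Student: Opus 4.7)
The plan is a parabolic Moser iteration applied to the drift–diffusion equation \eqref{Gamma} for $\Gamma=ru^{\theta}$. Since $\Gamma$ is scale-invariant under the Navier–Stokes scaling, $A(R)$ is scale-invariant, and both sides of \eqref{eqA.1} transform consistently, I first reduce to the unit scale $R=1$ and aim for $\sup_{Q(1/2)\setminus S}|\Gamma|\le N(1+A(1))^{5/2}\|\Gamma\|_{L^2(Q(1))}$. The argument splits into three steps: a Caccioppoli-type energy estimate for $\Gamma^{p}$ (exploiting the divergence-free structure of the drift and the vanishing of $\Gamma$ on the axis $\{r=0\}$); a Gagliardo–Nirenberg/parabolic Sobolev upgrade from $L^{2p}$ to $L^{10p/3}$; and a geometric iteration in $p$ whose convergent exponents produce the factor $(1+A(1))^{5/2}$.

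For the Caccioppoli step I would fix $\tfrac12\le\rho_1<\rho_2\le 1$ and a smooth cutoff $\varphi(x,t)=\eta(|x|)\chi(t)$ with $\varphi\equiv 1$ on $Q(\rho_1)$ and $\mathrm{supp}\,\varphi\subset Q(\rho_2)$, and test \eqref{Gamma} against $\Gamma^{2p-1}\varphi^2$. Two structural cancellations are crucial: the convective term, rewritten as $\tfrac{1}{2p}\bm{b}\cdot\nabla(\Gamma^{2p})\varphi^2$, integrates by parts via $\nabla\cdot\bm{b}=0$ to $-\tfrac{1}{p}\iint\Gamma^{2p}\varphi\,\bm{b}\cdot\nabla\varphi$; and the anomalous term $\tfrac{2}{r}\partial_r\Gamma\cdot\Gamma^{2p-1}\varphi^2$, integrated by parts in $r$ with the cylindrical volume element $2\pi r\,dr\,dx_3$ absorbing the $1/r$ and with $\Gamma|_{r=0}=0$ killing the boundary trace, reduces to $-\tfrac{2}{p}\iint\Gamma^{2p}\varphi\,\partial_r\varphi/r$, which is harmless because a radial cutoff gives $|\partial_r\eta/r|\le N\|\eta''\|_\infty\le N(\rho_2-\rho_1)^{-2}$. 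After using Young's inequality to absorb the cross term from the Laplacian into the diffusion, I expect
\[
\sup_t\int\Gamma^{2p}\varphi^2\,dx+\iint|\nabla(\Gamma^p\varphi)|^2\,dx\,dt\le\frac{N}{(\rho_2-\rho_1)^2}\iint_{Q(\rho_2)}\Gamma^{2p}+\frac{N}{\rho_2-\rho_1}\iint_{Q(\rho_2)}|\bm{b}|\,\Gamma^{2p}.
\]

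For the drift control I set $F=\Gamma^p\varphi$ and use Hölder in $x$, the Gagliardo–Nirenberg inequality $\|F\|_{L^4_x}^2\le C\|F\|_{L^2_x}^{1/2}\|\nabla F\|_{L^2_x}^{3/2}$, Hölder in $t$, and the bound $\|\bm{b}\|_{L^\infty_tL^2_x(Q(1))}\le A(1)^{1/2}$ to estimate the drift contribution by $\sqrt{A(1)}\,\|F\|_{L^\infty_tL^2_x}^{1/2}\|\nabla F\|_{L^2_{t,x}}^{3/2}$. A weighted Young inequality then absorbs the two energy quantities into the Caccioppoli left-hand side at the price of an extra term of the form $N(1+A(1))^{2}(\rho_2-\rho_1)^{-c}\iint_{Q(\rho_2)}\Gamma^{2p}$. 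Combining with the parabolic embedding $L^\infty_tL^2_x\cap L^2_tH^1_x\hookrightarrow L^{10/3}_{t,x}$ applied to $F$, I would obtain the iteration inequality
\[
\|\Gamma\|_{L^{10p/3}(Q(\rho_1))}^{2p}\le\frac{N\,p^{c_1}(1+A(1))^{2}}{(\rho_2-\rho_1)^{c_3}}\iint_{Q(\rho_2)}\Gamma^{2p}
\]
for universal $c_1,c_3$. Iterating with $p_k=(5/3)^k$ and $\rho_k=\tfrac12(1+2^{-k})$, the product of constants converges because $\sum_k 1/p_k=5/2<\infty$; the exponent on $(1+A(1))$ telescopes to $2\cdot\sum_k 1/(2p_k)=5/2$, and $\|\Gamma\|_{L^{2p_k}(Q(\rho_k))}\to\sup_{Q(1/2)\setminus S}|\Gamma|$ as $k\to\infty$, which after undoing the scaling gives \eqref{eqA.1}.

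The main obstacle will be closing the Moser iteration with the very weak drift bound $\bm{b}\in L^\infty_tL^2_x$, which lies just below the Ladyzhenskaya–Prodi–Serrin critical scale (e.g.\ $\bm{b}\in L^\infty_tL^3_x$). Exploiting divergence-freeness to transfer all derivatives from $\Gamma$ onto cutoffs, and balancing spatial and temporal integrability via Gagliardo–Nirenberg so that Young's inequality returns only an $\iint\Gamma^{2p}$ remainder, is the delicate part; a cruder drift estimate would either ruin the $p$-dependence needed for the iteration to converge or force $E(R)$ to appear on the right-hand side, which is forbidden by the statement. A secondary but essential technical point is the behavior near the axis of the anomalous $\tfrac{2}{r}\partial_r$ term, neutralized by the radial cutoff together with $\Gamma|_{r=0}=0$.
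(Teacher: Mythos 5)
Your proposal follows essentially the same route as the paper: a Caccioppoli estimate for powers of $\Gamma$ exploiting $\nabla\cdot\bm{b}=0$ and $\Gamma|_{r=0}=0$, the drift term controlled by $\|\bm{b}\|_{L^\infty_tL^2_x}\lesssim (RA(R))^{1/2}$ together with Gagliardo--Nirenberg and Young, the parabolic embedding into $L^{10/3}$, and a Moser iteration with exponents $(5/3)^k$ whose telescoping yields the power $5/2$ on $(1+A)/R$. The one ingredient you omit is the paper's preliminary ``lift of regularity'' step: for a suitable weak solution, $\Gamma$ is a priori only in $L^{10/3}(Q(1))$ (and smooth only off the axis), so testing directly with $\Gamma^{2p-1}\varphi^2$ for large $p$ is not justified; the paper first works with the truncations $\Gamma_n=\max(\min(\Gamma,n),-n)$ and a cutoff $\xi(r)$ supported away from the axis, bootstraps $\Gamma\in L^p_{\rm loc}$ for all finite $p$, and only then passes $n\to\infty$ to run the iteration you describe. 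This is a routine but necessary addition to make your argument rigorous.
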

\begin{proof}
Let $\frac{1}{2} \leq \sigma_{2}<\sigma_{1} \leq 1$ be arbitrary constants and $\varepsilon>0$ be a sufficient small constant. We introduce cut-off functions $\varphi(x, t)=\psi(|x|) \eta(t)$ satisfying
\begin{align*}
	\left\{\begin{array}{l}
		\operatorname{supp} \psi \subset B\left(\sigma_{1}R\right), \psi=1 \text { in } B\left(\sigma_{2}R\right), 0 \leq \psi \leq 1 , \\
		\\
		\operatorname{supp} \eta \subset\left(-\left(\sigma_{1}R\right)^{2}, 0\right], \eta=1 \text { in }\left(-\left(\sigma_{2}R\right)^{2}, 0\right], 0 \leq \eta \leq 1, \\
		\\
{\displaystyle		\left|\eta^{\prime}\right|+|\nabla^2\psi| \leq \frac{N}{\left(\sigma_{1}-\sigma_{2}\right)^2R^2},\quad \left|\frac{\nabla \psi}{\sqrt{\psi}}\right| \leq \frac{N}{\left(\sigma_{1}-\sigma_{2}\right)R},}
	\end{array}\right.
\end{align*}
and $\xi=\xi(r)$ satisfying 
$0\leq \xi(r) \leq 1$, $\xi(r)=0$ if $r\leq \varepsilon$, $\xi(r)=1$ if $r\geq 2\varepsilon$ and $|\xi^{(k)}(r)|\leq \frac{N}{\varepsilon^{k}}$ for $k=1,2$. 
	
Setp 1. Lift of regularity. We denote the truncation $\Gamma_{n}= \max(\min(\Gamma,n),-n)$ with  $n>100$.
Note that $\Gamma$ and $\bm{b}$ are smooth in $\operatorname{supp}\varphi\cap\operatorname{supp}\xi$ when $t<0$. 
Multiplying \eqref{Gamma}  by $2m|\Gamma_{n}|^{2m-2}\Gamma_{n} \varphi^{2}\xi$ with $ m\geq1$ and integrating by parts,  we have that
\begin{align}\label{eqA.2}
&\int_{B(1)}|\Gamma_{n}|^{2m}\varphi^2(t)\xi\dx+\frac{4m-2}{m}\int_{-1}^{t}\int_{B(1)}|\nabla|\Gamma_{n}|^m|^2\varphi^2\xi\dx\ds\notag\\
=&\int_{-1}^{t}\int_{B(1)}|\Gamma_{n}|^{2m}\left(\partial_{s}+\Delta+\bm{b}\cdot\nabla+\frac{2}{r}\partial_{r}\right)\left(\varphi^2\xi\right)\dx\ds\notag\\
&-2m\int_{B(1)}\left(\Gamma-\Gamma_{n}\right)|\Gamma_{n}|^{2m-2}\Gamma_{n}(t)\varphi^2\xi\dx\notag\\
&+2m\int_{-1}^{t}\int_{B(1)}\left(\Gamma-\Gamma_{n}\right)|\Gamma_{n}|^{2m-2}\Gamma_{n}\left(\partial_{s}+\Delta+\bm{b}\cdot\nabla+\frac{2}{r}\partial_{r}\right)\left(\varphi^2\xi\right)\dx\dt\notag\\
\leq&\int_{-1}^{t}\int_{B(1)}|\Gamma_{n}|^{2m}\left(\partial_{s}+\Delta+\bm{b}\cdot\nabla+\frac{2}{r}\partial_{r}\right)\left(\varphi^2\xi\right)\dx\ds\notag\\
&+2m\int_{-1}^{t}\int_{B(1)}\left(\Gamma-\Gamma_{n}\right)|\Gamma_{n}|^{2m-2}\Gamma_{n}\left(\partial_{s}+\Delta+\bm{b}\cdot\nabla+\frac{2}{r}\partial_{r}\right)\left(\varphi^2\xi\right)\dx\dt.
\end{align}

By $|\Gamma_{n}|\leq |\Gamma|\leq 2\varepsilon |\bm{u}|$ in $\operatorname{supp} \xi^{\prime}(r)$ and $\bm{u},\bm{b}\in L^{\frac{10}{3}}(Q(1))$, we can pass the limit $\varepsilon\rightarrow0$ in the inequality \eqref{eqA.2} and obtain that
\begin{align}\label{eqA.3}
&\int_{B(1)}|\Gamma_{n}|^{2m}(t)\varphi^2\dx+2\int_{-1}^{t}\int_{B(1)}|\nabla|\Gamma_{n}|^m|^2\varphi^2\dx\ds\notag\\
\leq&\int_{-1}^{t}\int_{B(1)}|\Gamma_{n}|^{2m}\left(\partial_{s}+\Delta+\bm{b}\cdot\nabla+\frac{2}{r}\partial_{r}\right)\varphi^2\dx\ds\notag\\
&+2m\int_{-1}^{t}\int_{B(1)}\left(\Gamma-\Gamma_{n}\right)|\Gamma_{n}|^{2m-2}\Gamma_{n}\left(\partial_{s}+\Delta+\bm{b}\cdot\nabla+\frac{2}{r}\partial_{r}\right)\varphi^2\dx\dt.
\end{align}
Therefore, using $\bm{b}\in L^{\frac{10}{3}}(Q(1))$, we have 
\begin{align}
	\|\Gamma_n\|_{L^{\frac{10}{3}m}(Q(\sigma_{2}R))}^{2m}\leq \frac{Nm\E(1)^{\frac{1}{2}}}{\left(\sigma_{1}-\sigma_{2}\right)^2R^2}\ 	\|\Gamma_n,\Gamma\|_{L^{\frac{20}{7}m}(Q(\sigma_{1}R))}^{2m}
\end{align}
for all $n$ and the same estimate for $\Gamma$.
By an argument of iteration, we have $\Gamma \in L^{p}(Q(R))$ for any $1\leq p<+\infty$ and $0<R<1$. 

\medskip
Step 2. Moser's iteration.
Passing the limit $n\rightarrow+\infty$ in \eqref{eqA.3}, the last integral of \eqref{eqA.3} vanishes and we obtain
\begin{align}\label{eqA.5}
	&\int_{B(1)}|\Gamma|^{2m}\varphi^2(t)\dx+2\int_{-1}^{t}\int_{B(1)}|\nabla|\Gamma|^{m}|^2\varphi^2\dx\ds\notag\\
	\leq&\int_{-1}^{t}\int_{B(1)}|\Gamma|^{2m}\left(\partial_{s}+\Delta+\bm{b}\cdot\nabla+\frac{2}{r}\partial_{r}\right)\varphi^2\dx\ds\notag\\
	\leq& \frac{N}{\left(\sigma_{1}-\sigma_{2}\right)^2R^2}\int_{-\left(\sigma_{1}R\right)^2}^{t}\int_{B(\sigma_{1}R)}|\Gamma|^{2m}\dx\ds\notag\\
	&+2\int_{-1}^{t}\int_{B(1)}\left(|\Gamma|^{m}\right)^{\frac{1}{2}}\left(|\Gamma|^{m}\varphi\right)^{\frac{3}{2}} |\bm{b}|\cdot\frac{\nabla \varphi}{\sqrt{\varphi}} \dx\ds\notag\\
	\leq& \frac{N}{\left(\sigma_{1}-\sigma_{2}\right)^2R^2}\int_{-\left(\sigma_{1}R\right)^2}^{t}\int_{B(\sigma_{1}R)}|\Gamma|^{2m}\dx\ds\notag\\
    &+\frac{N}{(\sigma_{1}-\sigma_{2})R}\int_{-1}^{t}\left\||\Gamma|^{m}\right\|_{L^{2}\left(B(\sigma_{1}R)\right)}^{\frac{1}{2}}\left\||\Gamma|^{m}\varphi\right\|_{L^{6}\left(\R^3\right)}^{\frac{3}{2}} \|\bm{b}\|_{L^{2}(B(R))}\ds\notag\\
    	\leq& \frac{N}{\left(\sigma_{1}-\sigma_{2}\right)^2R^2}\int_{-\left(\sigma_{1}R\right)^2}^{t}\int_{B(\sigma_{1}R)}|\Gamma|^{2m}\dx\ds\notag\\
    &+\frac{N \left(R A(R)\right)^{\frac{1}{2}}}{(\sigma_{1}-\sigma_{2})R}\int_{-1}^{t}\left\||\Gamma|^{m}\right\|_{L^{2}\left(B(\sigma_{1}R)\right)}^{\frac{1}{2}}\left\||\nabla\left(\Gamma|^{m}\varphi\right)\right\|_{L^{2}\left(\R^3\right)}^{\frac{3}{2}} \ds\notag\\
	\leq&\, N \frac{\left(1+A(R)\right)^2}{\left(\sigma_{1}-\sigma_{2}\right)^4 R^2}\int_{-\left(\sigma_{1}R\right)^2}^{t}\int_{B(\sigma_{1}R)}|\Gamma|^{2m}\dx\ds+\int_{-1}^{t}\int_{B(1)}|\nabla|\Gamma|^{m}|^2\varphi^2\dx\ds.
\end{align}
Accordingly, absorbing the last term in \eqref{eqA.5} by its first line, and using Lemma \ref{lemA9} on imbedding,
\begin{align}
\||\Gamma|^{m}\|_{L^{\frac{10}{3}}(Q(\sigma_{2}R))}\leq N \frac{\left(1+A(R)\right)}{\left(\sigma_{1}-\sigma_{2}\right)^2R} \||\Gamma|^{m}\|_{L^{2}(Q(\sigma_{1}R))}.
\end{align}
Picking $\mu_{j}=\frac{1}{2}+2^{-j-1}$ and $m_{j}=2\left(\frac{5}{3}\right)^{j}$ for integer $j\geq0$, we have that 
\begin{align}\label{eqA.7}
	\|\Gamma\|_{L^{m_{j+1}}(Q(\mu_{j+1}R))}\leq \left(\frac{2^{2j+2}N\left(1+A(R)\right)}{R}  \right)^{\left(\frac{3}{5}\right)^{j}} \|\Gamma\|_{L^{m_{j}}(Q(\mu_{j}R))}.
\end{align}
Thus, we have
\begin{equation}
\|\Gamma\|_{L^{m_{j}}(Q(\frac{1}{2} R))}\leq	\|\Gamma\|_{L^{m_{j}}(Q(\mu_{j}R))}
\leq N\left(\frac{\left(1+A(R)\right)}{R}\right)^{\frac{5}{2}}  \|\Gamma\|_{L^{m_{0}}(Q(\mu_{0}R))}.
\end{equation}
Note $\mu_0=1$ and $m_0=2$.
Letting $j\rightarrow\infty$, we obtain \eqref{eqA.1}.
\end{proof}

For $0<R\leq\frac{1}{4}$, we define
\begin{align}
	m_{R}=\inf_{Q(R)\backslash S}\Gamma,\quad M_{R}=\sup_{Q(R)\backslash S}\Gamma,\quad J_{R}=M_{R}-m_{R},
\end{align}
and
\begin{align}\label{eqA.10}
	h(x,t)=\left\{\begin{array}{lll}
\dis		\frac{2\left(M_{R}-\Gamma\right)}{J_{R}}, &\text{if}\ M_{R}>-m_{R},\\
		\\
\dis		\frac{2\left(\Gamma-m_{R}\right)}{J_{R}}, &\text{else}.
	\end{array}\right.
\end{align}
 Hence for $(x,t)\in Q(R)\setminus S$, $h(x,t)$ satisfies $0\leq h(x,t)\leq 2$ and
\begin{align}\label{h}
	\partial_{t} h+\left(\bm{b}\cdot\nabla\right)h-\left(\Delta-\frac{2}{r}\partial_{r}\right)h=0.
\end{align} 
For $\left(0,x_{3},t\right)\in Q(1)\backslash S$, $h(0,x_{3},t)$ equals a constant $a\geq 1$ since $\Gamma(0,x_{3},t)=0$.

\begin{lem}[initial lower bound]\label{lemA2}
There exists a constant $0<N_{4}<1$ such that
\begin{align}\label{eqA.11}
R^{-5}\|h\|_{L^{1}\left(B(\frac{1}{2} R)\times\left(-R^2,- \frac{1}{4}R^2\right)\right)}\geq N_{4} \left(1+A(R)\right)^{-1},
\end{align}
for $0<R\leq \frac{1}{4}$.
\end{lem}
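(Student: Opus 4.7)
The plan is to transfer the axis value $h|_{r=0} \equiv a \geq 1$ into a bulk $L^{1}$ lower bound on $Q' := B(\tfrac12 R) \times (-R^2, -\tfrac14 R^2)$, with the drift contribution producing the $(1+A(R))^{-1}$ factor. The key observation is that $\Gamma(0,x_{3},t)=0$ together with the pointwise bound $|\Gamma(x,t)|=|ru^{\theta}(x,t)|\leq r|\bm{u}(x,t)|$ yields, in the case $M_{R}>-m_{R}$,
\begin{align*}
h(x,t) \;=\; a - \frac{2\Gamma(x,t)}{J_{R}} \;\geq\; 1 - \frac{2r|\bm{u}(x,t)|}{J_{R}},
\end{align*}
with the other case being symmetric.

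Integrating this over $Q'$ and applying Cauchy--Schwarz together with $\int_{Q'} r^{2}\,dx\,dt \lesssim R^{7}$ and $\int_{Q'}|\bm{u}|^{2}\,dx\,dt \leq \tfrac34 R^{2}\cdot R A(R) \lesssim R^{3}A(R)$ gives
\begin{align*}
\int_{Q'} h\,dx\,dt \;\geq\; c\, R^{5} - \frac{C\,R^{5}A(R)^{1/2}}{J_{R}}.
\end{align*}
This already implies the claim whenever $J_{R}\gtrsim (1+A(R))^{1/2}$. In the complementary regime $J_{R}\lesssim(1+A(R))^{1/2}$ the simple pointwise bound degenerates, and I would instead test the equation \eqref{h} directly against a nonnegative cutoff $\varphi(x,t)=\psi_{1}(r)\psi_{2}(x_{3})\eta(t)$ with $\psi_{1}(0)=1$, supported in $B(\tfrac12 R)\times[-R^{2},-\tfrac{1}{16}R^{2}]$. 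Writing $dx=2\pi r\,dr\,dx_{3}$ in cylindrical coordinates and integrating by parts in $r$ the term $\int \frac{2}{r}\partial_{r}h\cdot\varphi\,dx$ produces, in addition to a harmless bulk term $-2\int \frac{h\,\partial_{r}\varphi}{r}\,dx$, an axis boundary contribution $-4\pi a\int\psi_{2}(x_{3})\eta(t)\,dx_{3}\,dt$. Using $h\geq 0$ and $h\leq 2$, together with $\|\bm{b}\|_{L^{2}(Q(R))}^{2}\leq R^{3}A(R)$ to control the advective bulk term $\int h\,\bm{b}\cdot\nabla\varphi$ via H\"older, one recovers a lower bound of the required form after choosing $\psi_{2},\eta$ so that $\int\psi_{2}\eta \gtrsim R^{3}$.

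The main obstacle is to carry out the integration by parts at the singular axis $\{r=0\}$ rigorously; as in Step 1 of the proof of Lemma \ref{lemA1}, this requires smoothly truncating a neighborhood of $\{r=0\}$ via a cutoff $\xi(r)$ and passing to the limit using $\bm{u}\in L^{10/3}(Q(1))$, so that the only surviving axis term is the boundary contribution proportional to $a$. A further delicate point is matching constants so as to obtain the sharp $(1+A(R))^{-1}$ decay (rather than $e^{-cA}$ or $(1+A)^{-1/2}$), since this precise dependence is what drives the subsequent iteration scheme in the proof of Proposition \ref{lemA6}.
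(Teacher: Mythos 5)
Your second argument --- testing the equation \eqref{h} for $h$ against a cutoff supported in $B(\tfrac12 R)\times(-R^2,-\tfrac14 R^2)$, truncating near the axis with $\xi(r)$ and passing $\varepsilon\to 0$, extracting the axis contribution $4\pi a\int \varphi|_{r=0}$ with $a\ge 1$, and controlling the bulk terms via $0\le h\le 2$ and $\|\bm{b}\|_{L^2(B(R))}\le (RA(R))^{1/2}$ --- is exactly the paper's proof and correctly yields the $(1+A(R))^{-1}$ factor. Two small points: the preliminary case split on $J_R$ versus $(1+A(R))^{1/2}$ is superfluous since the duality/testing argument covers all regimes, and the temporal support of your cutoff should lie in $(-R^2,-\tfrac14 R^2)$ (not up to $-\tfrac1{16}R^2$) so that the resulting $L^1$ lower bound is over the region stated in \eqref{eqA.11}.
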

\begin{proof}
We introduce the cut-off function $\widetilde{\varphi}(x, t)=\widetilde{\psi}(|x|) \widetilde{\eta}(t)$ satisfying
\begin{align*}
	\left\{\begin{array}{l}
		\operatorname{supp} \widetilde{\psi} \subset B\left(\frac{1}{2} R\right),\ \widetilde{\psi}=1 \text { in } B\left(\frac{1}{4} R\right),\ 0 \leq \widetilde{\psi} \leq 1 , \\
		\\
		\operatorname{supp} \widetilde{\eta} \subset\left(-R^{2}, -\frac{1}{4} R^2\right),\ \widetilde{\eta}=1 \text { in }\left(-\frac{7}{8} R^{2}, -\frac{3}{8} R^2\right),\ 0 \leq \widetilde{\eta} \leq 1, \\
		\\
		 \left|\nabla \widetilde{\psi}\right|\leq \frac{N}{R}, \ |\nabla^2\widetilde{\psi}| + \left|\widetilde{\eta}^{\prime}\right|\leq \frac{N}{R^2}.
	\end{array}\right.
\end{align*}
Let $\xi(x)$ be as in the proof of Lemma \ref{lemA1}.
Multiplying \eqref{h}  by $\widetilde{\varphi}\xi$ and integrating by parts, we have that
\begin{align*}
0=\int_{-R^2}^{-\frac{1}{4}R^2}\int_{B(\frac{1}{2} R)}h \left(\partial_{s}+\bm{b}\cdot\nabla\right)\left(\widetilde{\varphi} \xi\right)-\nabla h\cdot\nabla\left(\widetilde{\varphi}\xi\right)-\frac{2}{r}\partial_{r} h\cdot\left(\widetilde{\varphi}\xi\right)\dx\ds.
\end{align*}
Passing the limit $\varepsilon\rightarrow0$ using $h \in L^\infty \cap L^2H^1$, integrating by parts and using the fact that $0\leq h\leq 2$, we have
\begin{align*}
	0=\int_{-R^2}^{-\frac{1}{4}R^2}\int_{B(\frac{1}{2} R)} h \left(\partial_{s}+\Delta+\bm{b}\cdot\nabla+\frac{2}{r}\partial_{r}\right)\widetilde{\varphi} \dx\ds+4\pi a\int_{-R^2}^{-\frac{1}{4} R^2}\int_{-\frac{1}{2} R}^{\frac{1}{2} R}\widetilde{\varphi}(0,x_{3},s)\dx_{3}\ds.
\end{align*}
Thus,
\begin{align*}
	\pi R^3\leq& 4\pi a\int_{-R^2}^{-\frac{1}{4} R^2}\int_{-\frac{1}{2} R}^{\frac{1}{2} R}\widetilde{\varphi}(0,x_{3},s)\dx_{3}\ds\\
	=&-\int_{-R^2}^{-\frac{1}{4}R^2}\int_{B(\frac{1}{2} R)} h \left(\partial_{s}+\Delta+\bm{b}\cdot\nabla+\frac{2}{r}\partial_{r}\right)\widetilde{\varphi} \dx\ds\\
	\leq& NR^{-2}\|h\|_{L^{1}\left(B(\frac12 R)\times\left(-R^2,-\frac14 R^2\right)\right)}+N R^{-1}\int_{-R^2}^{-\frac{1}{4}R^2}\|h\|_{L^{2}\left(B(\frac12 R)\right)}\|\bm{b}\|_{L^{2}\left(B(R)\right)}\ds\\
	\leq &NR^{\frac{1}{2}}\|h\|_{L^{1}\left(B(\frac12 R)\times\left(-R^2,-\frac14 R^2\right)\right)}^{\frac{1}{2}}\left(1+A(R)^{\frac{1}{2}}\right).
\end{align*}
Hence \eqref{eqA.11}.
\end{proof}

Denote the cut--off function $\zeta(x)={\kappa}(|x|)$ satisfies
\begin{align}
		\operatorname{supp} {\kappa} \subset [0,1),\ \kappa=1 \text { in } [0,\frac{1}{2}],\ -N\leq {\kappa}^{\prime}\leq0 ,\ \int \zeta^2\dx=1,
\end{align}
and $\zeta_{R}(x)=R^{-\frac{3}{2}}\zeta\left(\frac{x}{R}\right)$ with $\int_{B(R)}\zeta_{R}^2\dx=1$.

\begin{lem}[weak Harnack inequality]\label{lem2.3}
	\begin{align}\label{eqA.12}
-\int_{\R^3}\ln h(x,t) \cdot\zeta_{R}^2(x)\dx\leq N  \left(1+A(R)\right)^{3},
	\end{align}
for $-\frac{1}{4} R^2\leq t<0$ with $0<R\leq \frac{1}{4}$.
\end{lem}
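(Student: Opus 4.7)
The plan is to prove this weak Harnack-type inequality via a logarithmic variant of Moser's iteration, adapted to the axisymmetric transport-diffusion equation for $h$. I would regularize by setting $v_\delta = -\ln(h+\delta)$ for small $\delta > 0$; since $h+\delta$ satisfies the same equation as $h$, a direct computation gives
$$\partial_t v_\delta + \bm{b}\cdot\nabla v_\delta - \Delta v_\delta + \tfrac{2}{r}\partial_r v_\delta + |\nabla v_\delta|^2 = 0$$
off the axis $S$. Two features are crucial: $v_\delta$ is bounded below by $-\ln(2+\delta)$ (since $h \leq 2$), and on the axis $v_\delta|_{r=0} = -\ln(a+\delta) \leq 0$ because $a \geq 1$.

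First I would multiply this equation by $\zeta_R^2$, introduce the axis cutoff $\xi$ from the proof of Lemma \ref{lemA1}, integrate over $\R^3$, and pass $\varepsilon \to 0$. Integration by parts using $\nabla\cdot\bm{b}=0$ moves derivatives onto $\zeta_R^2$; for the singular $(2/r)\partial_r v_\delta$ term I would use cylindrical IBP, legitimate because $(2/r)\partial_r$ is $L^2$-skew-adjoint on axisymmetric functions. This produces a bulk remainder $\int v_\delta\cdot\tfrac{2}{r}\partial_r(\zeta_R^2)\,\dx$ plus an axis boundary term $4\pi\int v_\delta(0,x_3,t)\,\zeta_R^2(0,x_3)\,\dx_3 \leq 0$, which may be discarded. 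Cauchy-Schwarz and Young then absorb the gradient cross-term $-2\int \zeta_R \nabla v_\delta\cdot\nabla\zeta_R$ into $\tfrac14\int|\nabla v_\delta|^2\zeta_R^2$ plus $CR^{-2}$, while the drift cross-term $2\int v_\delta \zeta_R\,\bm{b}\cdot\nabla\zeta_R$ is controlled using $\|\bm{b}\|_{L^2(B(R))}^2 \leq R\,A(R)$ together with a weighted Poincaré-Sobolev estimate for $v_\delta \zeta_R$. The net output is a differential inequality of the form
$$\frac{d}{dt}V(t) + \tfrac12\int|\nabla v_\delta|^2\zeta_R^2\,\dx \leq C\,\frac{(1+A(R))^2}{R^2}, \quad V(t) = \int v_\delta(x,t)\,\zeta_R^2(x)\,\dx.$$

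Lemma \ref{lemA2} then yields an initial time $t_0 \in [-R^2, -R^2/4]$ with $\int h(\cdot,t_0)\zeta_R^2\,\dx \gtrsim (1+A(R))^{-1}$, and the task reduces to converting this lower mass bound for $h$ into an upper bound $V(t_0) \leq C(1+A(R))^\beta$ for some $\beta$. After this, I would integrate the differential inequality from $t_0$ forward to any $t \in [-R^2/4, 0)$ to conclude $V(t) \leq N(1+A(R))^3$, and finally send $\delta \to 0$; the cubic exponent reflects the compounded Young's-inequality factors of $A$ together with the exponent produced by the initial-time conversion.

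The main obstacle is precisely this initial-time conversion: Jensen's inequality only gives the reverse direction $V(t_0) \geq -\ln\int h\,\zeta_R^2\,\dx$, so an upper bound on $V(t_0)$ must exploit the integrated dissipation $\int_{-R^2}^{t_0}\int|\nabla v_\delta|^2\zeta_R^2\,\dx\,\ds$ via a weighted Poincaré / parabolic John-Nirenberg argument that controls the tail of $v_\delta$ around its $\zeta_R^2$-weighted average, combined with the Chebyshev observation that $\{h \geq c/(1+A(R))\}$ has nontrivial $\zeta_R^2$-measure. A secondary technicality is the careful axisymmetric IBP near $r=0$ and bookkeeping of weights on the annular transition region $B(R)\setminus B(R/2)$ where $\zeta_R$ varies, but both are cleaner once the favorable sign of the axis boundary term is recognized.
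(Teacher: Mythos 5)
Your setup matches the paper's: the log transform $H_\delta=-\ln\frac{h_\delta}{3}$ (the paper normalizes by $3$ so that $H_\delta>0$), the weighted energy identity against $\zeta_R^2\xi$ with $\varepsilon\to0$, absorption of the drift and gradient cross-terms using $\|\bm{b}\|_{L^2(B(R))}^2\le R\,A(R)$, and the use of Lemma \ref{lemA2} to produce a large set of ``good'' times. But the step you yourself flag as ``the main obstacle'' --- converting the lower bound $\int h_\delta\,\zeta_R^2\dx\gtrsim(1+A(R))^{-1}$ into an upper bound on $\int(-\ln h_\delta)\,\zeta_R^2\dx$ --- is exactly the heart of the lemma, and your proposal does not supply it. Deferring it to an unspecified ``weighted Poincar\'e / parabolic John--Nirenberg argument'' leaves the proof incomplete, and your surrounding plan (first bound $V(t_0)$ at a single time $t_0$, then integrate the differential inequality forward) is structurally misaligned with what is actually available: there is no a priori upper bound on $\overline{H}_\delta$ at any single time, and Chebyshev plus Poincar\'e at a fixed $t_0$ cannot rule out $-\ln h$ being enormous on the small set where $h$ is tiny.

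The paper's resolution is a Nash-type inequality (Lemma \ref{lemA3}) combined with the weighted Poincar\'e inequality (Lemma \ref{lemA8}), which converts the spatial dissipation into a \emph{quadratic} damping of the average:
\begin{equation*}
\left|\ln \int \tfrac{h_{\delta}}3 \, \zeta_{R}^2\dx+\overline{H}_{\delta}\right|^2 \left(\int h_{\delta} \, \zeta_{R}^2\dx\right)^2
\leq N R^2 \int|\nabla H_{\delta}|^2\zeta_{R}^2\dx .
\end{equation*}
On the good time set $W$ (whose measure is $\gtrsim R^2(1+A(R))^{-1}$ by Lemma \ref{lemA2}), either $\overline{H}_\delta$ is already $O\bigl((1+A(R))\bigr)$ at some time --- in which case the linear bound \eqref{eqA.20} finishes --- or $\ln\int\frac{h_\delta}{3}\zeta_R^2\dx\ge-\frac12\overline{H}_\delta$ throughout, so the dissipation dominates $c\,R^{-2}(1+A(R))^{-2}\,\overline{H}_\delta^{\,2}\,\chi(s)$. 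The Riccati comparison $g'\le -Cg^2\chi$ then gives $\overline{H}_\delta(-\frac14R^2)\le \bigl(C\int\chi\bigr)^{-1}\le N(1+A(R))^3$ \emph{independently of the initial value}, which is precisely why no bound on $V(t_0)$ is ever needed; \eqref{eqA.20} then propagates the estimate to all $t\in[-\frac14R^2,0)$. Without this quadratic-damping mechanism (or a genuinely worked-out John--Nirenberg substitute), your argument does not close. A secondary point: the paper uses the axis value $H_\delta|_{r=0}\le\ln 3$ together with Lemma \ref{lemA8} to extract a damping term $-\overline{H}_\delta R^{-2}$ from the $\frac2r\partial_r$ contribution, rather than merely discarding a signed boundary term as you propose; discarding it is harmless for \eqref{eqA.20} but the paper's bookkeeping is what makes the subsequent dichotomy clean.
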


\begin{proof}
Denote $h_{\delta}=h+\delta$ and $H_{\delta}=-\ln\frac{h_{\delta}}{3} >0$ for $0<\delta<1$ a small constant. It is easy to see that $H_{\delta}(t)$ solves the equation
\begin{align}\label{eqA.13}
	\partial_{t} H_{\delta}+\bm{b}\cdot\nabla H_{\delta}-\left(\Delta-\frac{2}{r}\partial_{r}\right) H_{\delta}+|\nabla H_{\delta}|^2=0.
\end{align}
Let $\xi(x)$ be as in the proof of Lemma \ref{lemA1}.
Multiplying the equation \eqref{eqA.13} with $\zeta_{R}^2 \xi$ and integrating by parts, we have that for $-R^2\leq t_{0}<t<0$,
\begin{align*}
	&\int_{B(R)} H_{\delta} (t)\zeta_{R}^2\xi\dx+\int_{t_{0}}^{t}\int_{B(R)} |\nabla H_{\delta}|^2\zeta_{R}^2\xi \dx\ds\\
	=&	\int_{B(R)} H_{\delta} (t_{0})\zeta_{R}^2\xi\dx-\int_{t_{0}}^{t}\int_{B(R)}\bm{b}\cdot\nabla H_{\delta}\cdot\left(\zeta_{R}^2\xi\right)+\nabla H_{\delta}\cdot\nabla\left(\zeta_{R}^2\xi\right)+\frac{2}{r}\partial_{r} H_{\delta}\cdot\left(\zeta_{R}^2\xi\right)
	\dx\ds.
\end{align*}
Let $\overline{H}_{\delta}=\int_{B(R)}H_{\delta}\zeta_{R}^2\dx \in \mathrm{C}(-R^2,0)$. Passing $\varepsilon\rightarrow0$, we have 
\begin{align}
		&\int_{B(R)} H_{\delta} (t)\zeta_{R}^2\dx+\int_{t_{0}}^{t}\int_{B(R)} |\nabla H_{\delta}|^2\zeta_{R}^2 \dx\ds\notag\\
	=&	\int_{B(R)} H_{\delta} (t_{0})\zeta_{R}^2\dx-\int_{t_{0}}^{t}\int_{B(R)}\bm{b}\cdot\nabla H_{\delta}\cdot\zeta_{R}^2+\nabla H_{\delta}\cdot\nabla\zeta_{R}^2+\frac{2}{r}\partial_{r} H_{\delta}\cdot\zeta_{R}^2\dx\ds.
\end{align}
Since
\begin{align*}
-\int_{B(R)}\bm{b}\cdot\nabla H_{\delta}\cdot\zeta_{R}^2+\nabla H_{\delta}\cdot\nabla\zeta_{R}^2\dx\leq     NR^{-2}\left(1+A(R)\right)+\frac{1}{4}\int_{B(R)}|\nabla H_{\delta}|^2\zeta_{R}^2\dx,
\end{align*}
and by $H_\delta|_{r=0} \le \ln 3$ and Lemma \ref{lemA8} (weighted Poincar\'e inequality),
\begin{align*}
&\int_{B(R)}-\frac{2}{r}\partial_{r} H_{\delta}\cdot\zeta_{R}^2\dx=\int_{B(R)}-\frac{2}{r}\partial_{r} \left(H_{\delta}-\overline{H}_{\delta}\right)\cdot\zeta_{R}^2\dx\\ 
&\quad=4\pi\int_{-R}^{R} \left(H_{\delta}-\overline{H}_{\delta}\right)\zeta_{R}^2\left|_{r=0}\right.\dx_{3}+2\int_{B(R)} \left(H_{\delta}-\overline{H}_{\delta}\right)\, \frac{2}{r}\partial_{r}\zeta_{R}^2\dx  \\
&\quad \leq  NR^{-2}-\overline{H}_{\delta}R^{-2}
+ NR^{-2}
\left(\int_{B(R)}|H_{\delta}-\overline{H}_{\delta}|^2\zeta_{R}^2\dx\right)^{\frac{1}{2}}\\
&\quad \leq NR^{-2}-\overline{H}_{\delta}R^{-2}+NR^{-1}\left(\int_{B(R)}|\nabla H_{\delta}|^2\zeta_{R}^2\dx\right)^{\frac{1}{2}}\\
&\quad \leq NR^{-2}-\overline{H}_{\delta}R^{-2}+\frac{1}{4}\int_{B(R)}|\nabla H_{\delta}|^2\zeta_{R}^2\dx,
\end{align*}
we have that
\begin{align}
\overline{H}_{\delta}(t)
\leq&\overline{H}_{\delta}(t_{0})+R^{-2}\int_{t_{0}}^{t}\left(N\  \left(1+A(R)\right)-\overline{H}_{\delta}\right)\ds\notag\\
&-\frac{1}{2}\int_{t_{0}}^{t}\int_{B(R)} |\nabla H_{\delta}|^2\zeta_{R}^2 \dx\ds,
\end{align}
which implies that
\begin{align}\label{eqA.20}
	\overline{H}_{\delta}(t)\leq \overline{H}_{\delta}(t_{0})+N \left(1+A(R)\right).
\end{align}
Applying Lemma \ref{lemA3} (Nash inequality) with $f=\frac{h_{\delta}}3$, $d\mu=\zeta_{R}^2\dx$ and Lemma \ref{lemA8} (weighted Poincar\'e inequality), one has
\begin{align}
\left|\ln \int_{B(R)}\frac{h_{\delta}}3 \ \zeta_{R}^2\dx+\overline{H}_{\delta}\right|^2 \left(\int_{B(R)}h_{\delta} \ \zeta_{R}^2\dx\right)^2
\leq& N \int_{B(R)}|H_{\delta}-\overline{H}_{\delta}|^2\zeta_{R}^2\dx\notag\\
\leq& N R^2 \int_{B(R)}|\nabla H_{\delta}|^2\zeta_{R}^2\dx.
\end{align}
Thereby we obtain that 
\begin{align}
\notag
\overline{H}_{\delta}(t)
\leq&\overline{H}_{\delta}(t_{0})+R^{-2}\int_{t_{0}}^{t}\left(N \left(1+A(R)\right)-\overline{H}_{\delta}\right)\ds\\
&-NR^{-2}\int_{t_{0}}^{t}\left|\ln \int_{B(R)}\frac{h_{\delta}}3 \ \zeta_{R}^2\dx+\overline{H}_{\delta}\right|^2 \left(\int_{B(R)}h_{\delta} \ \zeta_{R}^2\dx\right)^2\ds. \label{eq2.20}
\end{align}

Let $\chi(s)$ be the characteristic function of the set
\begin{align*}
W=\left\{s\in(-R^2,-\frac{1}{4} R^2):\|h_{\delta}\|_{L^{1}(B(\frac{1}{2} R))}\geq N_{4} R^3\left(1+A(R)\right)^{-1}\right\},
\end{align*}
where $N_4$ is the constant in Lemma \ref{lemA2}.
We assert that $|W|> \frac{N_{4} \left(1+A(R)\right)^{-1}}{8} R^2$. In fact, if $|W|\leq \frac{N_{4}\left(1+A(R)\right)^{-1}}{8} R^2$, then
\begin{align*}
	\|h_{\delta}\|_{L^{1}\left(B(\frac12 R)\times\left(-R^2,-\frac{1}{4} R^2\right)\right)}\leq& \int_{W}\|h_{\delta}\|_{L^{1}(B(\frac12 R))}\ds+\int_{(-R^2,-\frac{1}{4} R^2)\backslash W}\|h_{\delta}\|_{L^{1}(B(\frac12 R))}\ds\\
	\leq& \frac{\pi}{6} R^3 |W| {\cdot\norm{h_\delta}_\infty} +\frac{3N_{4} R^5 \left(1+A(R)\right)^{-1}}{4}\\
	\leq &  (\tfrac\pi{16}+\tfrac{3}{4}) N_{4}R^5 \left(1+A(R)\right)^{-1},
\end{align*}
which contradicts Lemma \ref{lemA2}. Thus, one has by \eqref{eq2.20} that for $-R^2\leq t_{0}<t<0$,
\begin{align}
\notag
	\overline{H}_{\delta}(t)\leq& \overline{H}_{\delta}(t_{0})+R^{-2}\int_{t_{0}}^{t}\left(N_{5} \left(1+A(R)\right)-\overline{H}_{\delta}\right)\ds\\
	&-N_{6} {R^{-2}}\left(1+A(R)\right)^{-2}\int_{t_{0}}^{t}\left|\ln \int_{B(R)}{\frac {h_{\delta}}3} \ \zeta_{R}^2\dx+\overline{H}_{\delta}\right|^2 \chi(s)\ds.\label{eq2.21}
\end{align}

We claim that  for $-\frac14 R^2\leq t<0$,
\begin{align}\label{eqA.21}
	\overline{H}_{\delta}(t)\leq N \left(1+A(R)\right)^3,
\end{align}
which implies \eqref{eqA.12} directly by passing $\delta\rightarrow0$.

If for some $t_{0}\in [-R^2,-\frac14 R^2) $ we have
\begin{align*}
\overline{H}_{\delta}(t_{0})\leq 2\ln \frac{1+A(R)}{N_{4}}+N_{5}\left(1+A(R)\right)+100,
\end{align*}
then by \eqref{eqA.20}, we have \eqref{eqA.21}. Otherwise, we have
\begin{align*}
	\overline{H}_{\delta}(s)> 2\ln \frac{1+A(R)}{N_{4}}+N_{5}\left(1+A(R)\right)+100,
\end{align*}
for all $s\in [-R^2,-\frac14 R^2) $. For $s\in W\cap (-R^2,-\frac14 R^2)$,
\begin{align*}
\ln \int_{B(R)}\frac{h_{\delta}}{3} \ \zeta_{R}^2\dx
\ge \ln  \int_{B(\frac12 R)}\frac {h_{\delta}}{3R^3} \ \dx \ge \ln \left( \frac {N_4} {3(1+A(R))} \right)
\geq -\frac{1}{2} \overline{H}_{\delta}.
\end{align*}
Therefore, for $-R^2\leq t_{0}<t\leq-\frac14 R^2$, \eqref{eq2.21} gives (noting its first integral is nonpositive)
\begin{align*}
	\overline{H}_{\delta}(t)
		\leq& \overline{H}_{\delta}(t_{0})-\frac{1}{4}N_{6} R^{-2}\left(1+A(R)\right)^{-2}\int_{t_{0}}^{t}\overline{H}_{\delta}^2\  \chi(s)\ds.
\end{align*}
By comparison with the solution of $g(t) = g(t_0)- C \int_{t_0}^t g^2 \chi(s)\ds$, $g(t)^{-1} = g(t_0)^{-1} + 
C \int_{t_0}^t \chi(s)\ds \ge C \int_{t_0}^t \chi(s)\ds$, we get
\begin{align}
	\overline{H}_{\delta}(-\frac14 R^2)\leq& \frac{4}{N_{6} R^{-2}\left(1+A(R)\right)^{-2}\int_{-R^2}^{-\frac14 R^2}\chi(s)\ds}\notag\\
	\leq& N \left(1+A(R)\right)^{3}.
\end{align}
By \eqref{eqA.20} with $t_0 = -\frac14 R^2$, we have \eqref{eqA.21}.
\end{proof}

\begin{lem}[strong Harnack inequality] \label{lemA.5}
Let $0<\beta<\frac 18$, $0<\tau<1$, and $\omega(R)=\left(\ln \ln\frac{100}{R}\right)^{-1}$.
If $A(R)$ satisfies that for all $0<R\leq\frac{1}{4}$,
\begin{align}
	A(R) \omega(R)^{\beta}\leq K,
\end{align} 
then for $0<R\leq \frac{1}{4}$,
\begin{align}\label{eqA.24}
	\inf_{Q(\frac14 R)\backslash S} h\geq \frac{1}{2}  \lambda(R),
\end{align}
where \crb{$\lambda(R)=N_{7} \left(\ln\frac{100}{R}\right)^{\tau-1}$ and $0<N_{7}=N_7(\tau)<1$ is a sufficiently small constant.}
\end{lem}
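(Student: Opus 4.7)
The plan is to upgrade the $L^1$-bound on $-\log h$ from Lemma \ref{lem2.3} into a pointwise lower bound on $h$ by combining a Moser iteration for negative powers of $h$ with a Bombieri--Giusti crossover, and then to exploit the numerical gap $3\beta<\tfrac38<1$ against the target decay exponent $1-\tau>0$.

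First I would derive a reverse Moser estimate on $h_\delta:=h+\delta$ for $\delta>0$. Testing \eqref{h} against $h_\delta^{-p-1}\xi\varphi^2$ for small $p>0$, with the same cutoffs $\varphi,\xi$ as in Lemma \ref{lemA1}, yields a Caccioppoli inequality for $h_\delta^{-p/2}$: the good term $(p+1)\int h_\delta^{-p-2}|\nabla h|^2\xi\varphi^2$ rewrites as $|\nabla h_\delta^{-p/2}|^2$ up to constants, the singular coefficient $\frac{2}{r}\partial_r h$ is handled on the axis via $h\equiv a\ge 1$ there (as with $H_\delta$ in Lemma \ref{lem2.3}), and the drift $\bm b$ is absorbed by H\"older--Sobolev exactly as in \eqref{eqA.5}. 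Moser iteration across dyadic subcubes, mirroring \eqref{eqA.7}, then gives for every $0<p\le p_0$ and $R/4\le\rho<R'\le R/2$,
\begin{align*}
\sup_{Q(\rho)\setminus S}h_\delta^{-1}
 \le \Bigl(\tfrac{N(1+A(R))}{R'-\rho}\Bigr)^{\gamma/p}\Bigl(\iint_{Q(R')}h_\delta^{-p}\dx\dt\Bigr)^{1/p}.
\end{align*}

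Second, I would combine this reverse Moser family with the weak-$L^1$-$\log$ estimate supplied by Lemma \ref{lem2.3} --- namely, for each $t\in[-R^2/4,0)$ Chebyshev gives $|\{x\in B(R):-\log h(x,t)>\mu\}|\lesssim R^{3}(1+A(R))^3/\mu$ --- through a Bombieri--Giusti crossover. This yields
\begin{align*}
\sup_{Q(R/4)\setminus S}h_\delta^{-1}\le \exp\bigl(N(1+A(R))^{3}\bigr)
\end{align*}
with $N$ independent of $\delta$; sending $\delta\to 0^+$ preserves the bound. Substituting the standing hypothesis $A(R)\le K\omega(R)^{-\beta}=K(\ln\ln\tfrac{100}{R})^{\beta}$ gives $(1+A(R))^3\le NK^3(\ln\ln\tfrac{100}{R})^{3\beta}$, so
\begin{align*}
\inf_{Q(R/4)\setminus S}h\ \ge\ \exp\bigl(-NK^3(\ln\ln\tfrac{100}{R})^{3\beta}\bigr).
\end{align*}
Since $3\beta<\tfrac38<1$, for every $\epsilon>0$ there is $C(\epsilon,\beta)$ such that $(\ln\ln x)^{3\beta}\le\epsilon\ln\ln x+C$ for $x\ge e^{e}$; choosing $\epsilon=(1-\tau)/(NK^3)$ converts the previous display into $\inf h\ge c(\tau,\beta,K)(\ln\tfrac{100}{R})^{\tau-1}$, and taking $N_7=N_7(\tau)$ sufficiently small delivers $\inf h\ge\tfrac12\lambda(R)$.

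The main obstacle I foresee is the Bombieri--Giusti crossover in this parabolic axisymmetric setting: unlike the classical elliptic case, one must interleave forward-in-time Moser on cylinders with the $L^\infty_t L^1_x$-log bound of Lemma \ref{lem2.3}, all while keeping the dependence of every constant on $A(R)$ strictly \emph{polynomial}. An exponential dependence would force a double exponential in the final bound and place the target decay $(\ln)^{\tau-1}$ out of reach. A subsidiary technical nuisance is the singular factor $\frac{2}{r}\partial_r$: one must ensure the axis contributions created at each Moser step (by the same cutoff $\xi$ that appears in Lemmas \ref{lemA1} and \ref{lem2.3}) do not pollute the constants with any factor worse than a power of $1+A(R)$.
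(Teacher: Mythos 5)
Your route is genuinely different from the paper's, and at the level of strategy it is plausible; but as written it has a real gap: the two steps you yourself flag as ``obstacles'' --- the reverse Moser iteration for negative powers of $h_\delta$ with constants depending only \emph{polynomially} on $1+A(R)$, and the parabolic Bombieri--Giusti crossover --- are precisely the content of the lemma, and you carry out neither. In this setting neither is a routine citation: the drift $\bm{b}$ is only in the energy class, so every Caccioppoli step must route the drift term through $\|\bm{b}\|_{L^{2}(B(R))}\leq (RA(R))^{1/2}$ as in \eqref{eqA.5} (this is where the powers of $1+A(R)$ accumulate and must be tracked), and the singular term $\frac{2}{r}\partial_{r}$ produces, after the $\varepsilon\to0$ limit in $\xi$, an axis trace proportional to $\int h_{\delta}^{-p}\big|_{r=0}\,\varphi^{2}\dx_{3}$ whose sign and size need to be checked (it is controlled because $h|_{r=0}=a\geq1$, but this must be said). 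Without a precise statement of the crossover lemma and a verification that its output is $\exp\bigl(N(1+A(R))^{3}\bigr)$ --- rather than a larger power forced by the interleaving of time slices --- the intermediate bound $\inf h\geq\exp\bigl(-NK^{3}(\ln\ln\frac{100}{R})^{3\beta}\bigr)$ is not established. Your final numerical step, by contrast, is fine: since $3\beta<1$, Young's inequality converts that bound into \eqref{eqA.24} after shrinking $N_{7}$, and indeed your intermediate bound, if proved, would be strictly stronger than the stated conclusion.

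For comparison, the paper avoids the crossover entirely by a De Giorgi-type shortcut. From Lemma \ref{lem2.3} and Chebyshev one extracts only the measure bound \eqref{eq2.26}, $\bigl|\{x\in B(\frac12 R):h\leq\lambda(R)\}\bigr|\leq NR^{3}(1+A(R))^{3}/(-\ln\lambda(R))$; then the local maximum estimate of Lemma \ref{lemA1} is applied to $(\lambda(R)-h)_{+}$, which is a bounded subsolution vanishing on the axis, giving $\sup_{Q(\frac14R)\setminus S}(\lambda(R)-h)_{+}\leq N\lambda(R)(1+A(R))^{4}(-\ln\lambda(R))^{-1/2}$. With $\lambda(R)=N_{7}(\ln\frac{100}{R})^{\tau-1}$ one has $-\ln\lambda(R)\geq(-\ln N_{7})+(1-\tau)\ln\ln\frac{100}{R}$, and the hypothesis $\beta<\frac18$ makes $(1+A(R))^{4}\leq K^{4}(\ln\ln\frac{100}{R})^{4\beta}$ with $4\beta<\frac12$ absorbable into $(-\ln\lambda(R))^{1/2}$; choosing $N_{7}$ small then yields $\sup(\lambda-h)_{+}\leq\frac12\lambda$, i.e.\ \eqref{eqA.24}. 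This is why the paper needs the weak Harnack input only in averaged form and never proves a pointwise Harnack inequality. To salvage your version, either prove the parabolic Bombieri--Giusti lemma with explicit constant tracking in the presence of the $L^{2}$ drift and the axis term, or restructure the second half along the paper's lines.
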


\begin{proof}
By \eqref{eqA.12} of Lemma \ref{lem2.3}, one has that for $-\frac14 R^2\leq t<0$
\begin{align*}
-\int_{h\leq \lambda(R)}\ln h \cdot\zeta_{R}^2\dx\leq N   \left(1+A(R)\right)^{3},
\end{align*}
which implies that 
\begin{equation}\label{eq2.26}
\left|\left\{x\in B(\frac{1}{2} R) \ \bigg|\ h\leq \lambda(R)\right\}\right|\leq \frac{N  R^3 \left(1+A(R)\right)^{3}}{-\ln \lambda(R)}.
\end{equation}
By a similar argument as Step 2 in the proof of Lemma \ref{lemA1},  using $(\lambda(R)-h)_{+}=0$ on $r=0$,
we have the same conclusion of Lemma \ref{lemA1} for $\Gamma= (\lambda(R)-h)_{+}$ that
\[
\sup_{Q(\frac14 R)\backslash S}\left(\lambda(R)-h\right)_{+}\leq N   \left(\frac{1+A(R)}{R}\right)^{\frac{5}{2}}  \|\left(\lambda(R)-h\right)_{+}\|_{L^{2}(Q(\frac12 R))}.
\]
By \eqref{eq2.26},
\begin{align*}
\sup_{Q(\frac14 R)\backslash S}\left(\lambda(R)-h\right)_{+}
\leq & N \lambda(R)\frac{ \left(1+A(R)\right)^{4}}{\sqrt{-\ln \lambda(R)}}\\
\leq & \frac{NK^{4}}{ \left(-\ln N_{7}\right)^{\frac{1}{2}-4\beta}\crb{(1-\tau)^{4\beta}}}  \lambda(R),
\end{align*}
which implies that
\begin{align*}
\inf_{Q(\frac14 R)\backslash S} h\geq \lambda(R) \left(1-\frac{NK^{4}}{ \left(-\ln N_{7}\right)^{\frac{1}{2}-4\beta}\crb{(1-\tau)^{4\beta}}} \right).
\end{align*}
Thus, we obtain \eqref{eqA.24} if we pick $N_{7}$ sufficiently small such that
\begin{equation}
1-\frac{NK^{4}}{ \left(-\ln N_{7}\right)^{\frac{1}{2}-4\beta}\crb{(1-\tau)^{4\beta}}}\geq \frac{1}{2} . \qedhere
\end{equation}
\end{proof}

Now, we are in the position to prove Proposition \ref{lemA6}. 

\begin{proof}[Proof of Proposition \ref{lemA6}]
Noting that $m_{R}\leq m_{\frac14 R}\leq M_{\frac14 R}\leq M_{R}$, \eqref{eqA.10} and \eqref{eqA.24}, we obtain that
\begin{align}
J_{\frac14 R}\leq \left(1-\frac{\lambda(R)}{4}\right) J_{R},
\end{align}
for $0<R\leq \frac{1}{4}$. By standard iteration and using $\ln(1-t) < -t$ for $0<t<1$, we have that for $j\geq 1$,
\begin{align*}
	J_{2^{-2j} R}\leq& \exp\left(\Sigma_{k=0}^{j-1}\ln\left(1-\frac{N_{7}}{4}\left(\ln\frac{100\cdot 2^{2k} }{R}\right)^{\tau-1}\right)\right) J_{R}\\
	\leq&\exp\left(-\frac{N_{7}}{4}\Sigma_{k=0}^{j-1}\left(\ln\frac{100\cdot 2^{2k} }{R}\right)^{\tau-1}\right)J_{R}\\
	\leq&\exp\left(-\frac{N_{7}}{4}\int_{0}^{j}\left(\ln\frac{100}{R}+s\ln4 \right)^{\tau-1}\ds\right)J_{R}\\
	\leq &\, e^{-\frac{N_{7}}{4\tau\ln 4 }\left(\left(\ln\frac{100}{R}+j \ln4\right)^{\tau}-\left(\ln\frac{100}{R}\right)^{\tau}\right)} J_{R},
\end{align*}
which implies that for $0<\rho<R\leq \frac{1}{4}$,
\begin{align}
	J_{\rho}\leq& e^{-\frac{N_{7}}{{4\tau\ln 4}}\left(\left(\ln\frac{100}{\rho}-\ln4\right)^{\tau}-\left(\ln\frac{100}{R}\right)^{\tau}\right)} J_{R}\notag\\
	\leq& e^{-\frac{N_{7}}{{4\tau\ln 4}}\left(\left(\ln\frac{100}{\rho}\right)^{\tau}-\left(\ln \frac{100}{R}\right)^{\tau}-2\right)} J_{R}.
\end{align}
Hence \eqref{OSC}. 

Finally, if \eqref{A} holds for \emph{all} $z_{0}=(0,x_{0,3},t_{0})\in Q(\frac{1}{8})$, then we have \eqref{OSC} for all such $z_0$. Being suitable in $Q(1)$, we have $A(1)+E(1)<\infty$. By Lemma \ref{lemA1}, $\norm{\Gamma}_{L^\infty(Q(3/8))} \lec  (1+A(1))^{3}$. 
We can deduce \eqref{lnr} from \eqref{OSC} and the fact that $\Gamma (0,x_{3},t)=0$ for a.e.~$(0,x_{3},t)\in Q(\frac{1}{8})$.
\end{proof}

\section{Local energy estimates}
In this section, we will give some useful local energy estimates. Let $(\bm{u},\Pi)$ be a suitable weak solution of \eqref{NS} in $Q(1)$. Recall the quantities $A,E,C,D,\E$ defined in \eqref{AE.def} and $G$ and $G_\alpha$ in \eqref{GR.def}.

\begin{lem}\label{lem1}
	 For $z_{0}=({x_{0},t_{0}})\in Q(\frac{1}{8})$ and $0<\rho\leq \frac{1}{4}$, we have
	\begin{align}
		A\left(z_{0},\rho\right)+E\left(z_{0},\rho\right)\leq N \left(1+C(z_{0},2\rho)+D(z_{0},2\rho)\right).
	\end{align}
\end{lem}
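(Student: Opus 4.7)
The plan is to apply the local energy inequality \eqref{local energy} with a standard parabolic cut-off, then control each resulting term by $C(z_{0},2\rho)$ and $D(z_{0},2\rho)$ via H\"older and Young. Since $z_{0}\in Q(\tfrac18)$ and $2\rho\le \tfrac12$, the cylinder $Q(z_{0},2\rho)\subset Q(1)$, so the suitable weak solution hypotheses apply throughout.

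First, I choose $\varphi(x,t)=\psi(x)\eta(t)\in C_{c}^{\infty}$ satisfying
\[
\varphi\equiv 1\text{ on }Q(z_{0},\rho),\quad \operatorname{supp}\varphi\subset B(x_{0},2\rho)\times(t_{0}-(2\rho)^{2},t_{0}+(2\rho)^{2}),
\]
with $|\nabla\varphi|\le N\rho^{-1}$ and $|\partial_{t}\varphi|+|\nabla^{2}\varphi|\le N\rho^{-2}$. Plugging $\varphi$ into \eqref{local energy} and letting $t\nearrow t_{0}$, the left-hand side controls $\int_{B(x_{0},\rho)}|\bm u(x,t)|^{2}\dx+2\iint_{Q(z_{0},\rho)}|\nabla\bm u|^{2}\dx\ds$ for every $t\in(t_{0}-\rho^{2},t_{0})$, so after taking the $\sup$ in $t$ and dividing by $\rho$ I will recover $A(z_{0},\rho)+2E(z_{0},\rho)$.

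Next, I will estimate the three right-hand pieces:
\begin{align*}
\text{(I)}&=\iint_{Q(z_{0},2\rho)}|\bm u|^{2}\bigl(|\partial_{s}\varphi|+|\Delta\varphi|\bigr)\dx\ds\lesssim \rho^{-2}\iint_{Q(z_{0},2\rho)}|\bm u|^{2},\\
\text{(II)}&=\iint_{Q(z_{0},2\rho)}|\bm u|^{3}|\nabla\varphi|\dx\ds\lesssim \rho^{-1}\iint_{Q(z_{0},2\rho)}|\bm u|^{3},\\
\text{(III)}&=2\iint_{Q(z_{0},2\rho)}|\bm u||\Pi|\,|\nabla\varphi|\dx\ds\lesssim \rho^{-1}\iint_{Q(z_{0},2\rho)}|\bm u||\Pi|.
\end{align*}
For (I), H\"older gives $\iint|\bm u|^{2}\le (\iint|\bm u|^{3})^{2/3}|Q(z_{0},2\rho)|^{1/3}\lesssim \rho^{3}C(z_{0},2\rho)^{2/3}$, so $\text{(I)}\lesssim \rho\,C(z_{0},2\rho)^{2/3}\lesssim \rho(1+C(z_{0},2\rho))$ by Young. (II) is directly $\lesssim \rho\,C(z_{0},2\rho)$. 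For (III), H\"older with exponents $(3,\tfrac32)$ yields
\[
\iint|\bm u||\Pi|\le \Bigl(\iint|\bm u|^{3}\Bigr)^{1/3}\Bigl(\iint|\Pi|^{3/2}\Bigr)^{2/3}\lesssim \rho^{2}C(z_{0},2\rho)^{1/3}D(z_{0},2\rho)^{2/3},
\]
so $\text{(III)}\lesssim \rho(C(z_{0},2\rho)+D(z_{0},2\rho))$ by Young. Summing and dividing by $\rho$ yields the claimed estimate.

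There is no genuine obstacle here: everything reduces to careful bookkeeping with H\"older and Young after the cut-off is inserted. The only mild point of care is that the pressure in \eqref{local energy} is the one paired with $\nabla\Pi$ in \eqref{NS}; since we only use $\Pi\in L^{3/2}(Q(1))$ from the suitable weak solution definition, no modification of $\Pi$ by a function of time is needed for this local bound.
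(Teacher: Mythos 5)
Your proposal is correct and follows essentially the same route as the paper's proof: insert a standard cut-off into the local energy inequality \eqref{local energy}, then bound the three right-hand terms by H\"older and Young to get $C^{2/3}+C+C^{1/3}D^{2/3}\le N(1+C+D)$. The paper states this in two lines; your version just fills in the bookkeeping.
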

\begin{proof}
	By choosing a suitable test function $\varphi$ in the local energy inequality \eqref{local energy}, we get
	\begin{align*}
		A(z_{0},\rho)+E(z_{0},\rho) \leq& N\left(C(z_{0},2 \rho)^{\frac{2}{3}}+C(z_{0},2 \rho)+\frac{1}{\rho^{2}}\|\bm{u}\|_{L^{3}\left(Q(z_{0},2 \rho)\right)}\|\Pi\|_{L^{\frac{3}{2}}\left(Q(z_{0},2 \rho)\right)}\right)\\
		\leq&N\left(C(z_{0},2\rho)^{\frac{2}{3}}+C(z_{0},2\rho)+C(z_{0},2\rho)^{\frac{1}{3}}D(z_{0},2\rho)^{\frac{2}{3}}\right)\\
		\leq &N \left(1+C(z_{0},2\rho)+D(z_{0},2\rho)\right).\qedhere
	\end{align*}
\end{proof}
\begin{lem}\label{lem2}
	For $0<\rho\leq R\leq \frac{1}{4}$, we have
	\begin{align}\label{eq2.2}
		C(z_{0},\rho)\leq N\left(\frac{R}{\rho}\right)^2 \E(z_{0},R)^{1-\frac{\gamma}{6}} G(z_{0},R)^{1+\frac{\gamma}{3}}+N\E(1)^{\frac{9}{2}}\left(\frac{R}{\rho}\right)^{2}E(z_{0},R)^{\frac{3}{4}}.
	\end{align}
\end{lem}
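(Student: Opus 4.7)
The plan is to first observe that $\rho \leq R$ implies $Q(z_0,\rho) \subset Q(z_0,R)$, hence
\[
C(z_0,\rho)\leq \Bigl(\tfrac{R}{\rho}\Bigr)^{2} R^{-2}\iint_{Q(z_0,R)} |\bm u|^3 \dx\dt.
\]
It therefore suffices to establish the bound $R^{-2}\iint_{Q(z_0,R)}|\bm u|^3\dx\dt \lec \E(z_0,R)^{1-\gamma/6}G(z_0,R)^{1+\gamma/3}+\E(1)^{9/2}E(z_0,R)^{3/4}$, and then multiply through by $(R/\rho)^{2}$. To do this I decompose $\bm u = \bm b + u^\theta\bm e_\theta$, so that $|\bm u|^3 \lec |\bm b|^3 + |u^\theta|^3$, and treat the two contributions separately: the $\bm b$ piece will produce the $G$-term (using the assumption that $\bm b$ lies in $L^{p,q}$ at scale $R$), and the $u^\theta$ piece the $E^{3/4}$-term (using axisymmetry and Lemma \ref{lemA1}).

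For the swirl piece, I use the algebraic identity $|u^\theta|^2 = (|u^\theta|/r)\cdot|\Gamma|$ to write $|u^\theta|^3 = (|u^\theta|^2)^{3/2} \leq \|\Gamma\|_{L^\infty(Q(z_0,R))}^{3/2}\,|u^\theta/r|^{3/2}$. Hölder in space-time gives $\iint_{Q(R)}|u^\theta/r|^{3/2} \leq |Q(R)|^{1/4}\bigl(\iint|u^\theta/r|^2\bigr)^{3/4}$, and the cylindrical-coordinate expression for $|\nabla\bm u|^2$ on axisymmetric vector fields contains the term $(u^\theta/r)^2$, so the crucial pointwise bound $|u^\theta/r|^2 \leq |\nabla\bm u|^2$ holds and yields $\iint|u^\theta/r|^2 \leq RE(z_0,R)$. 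Thus $R^{-2}\iint|u^\theta|^3 \lec \|\Gamma\|_{L^\infty}^{3/2}\,E(z_0,R)^{3/4}$. For the $L^\infty$-bound on $\Gamma$ I invoke Lemma \ref{lemA1} centered at the axial point $z_0$ (the cutoff construction there is translation-invariant in $x_3,t$, and since $z_0$ lies on the symmetry axis the proof transfers verbatim) on the cylinder $Q(z_0,3/4) \subset Q(1)$; bounding $A(z_0,3/4)$ and $\|\Gamma\|_{L^2(Q(z_0,3/4))}$ by $N\E(1)$ and $N\E(1)^{1/2}$ respectively yields $\|\Gamma\|_{L^\infty(Q(z_0,R))} \lec \E(1)^{3}$, and raising to the $3/2$-power produces the factor $\E(1)^{9/2}$.

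For the $\bm b$ piece, I split $|\bm b|^3 = |\bm b|^{a}\cdot|\bm b|^{3-a}$ and apply Hölder in space followed by Hölder in time, obtaining
\[
\iint_{Q(R)} |\bm b|^3 \leq \|\bm b\|_{L^{p,q}(Q(R))}^{a}\,\|\bm b\|_{L^{\mu,\beta}(Q(R))}^{3-a},
\]
where the dual pair $(\mu,\beta)$ is determined by $a/p+(3-a)/\mu = 1$ and $a/q+(3-a)/\beta = 1$. The exponent $a = 1+\gamma/3$ is chosen so that $3-a = 2-\gamma/3$ and hence the second factor will ultimately carry the power $\E(z_0,R)^{(3-a)/2} = \E(z_0,R)^{1-\gamma/6}$. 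By the definition of $G$ the first factor equals $(R^{1-\gamma}G(z_0,R))^{a}$, while the second factor is controlled by the standard localized parabolic interpolation $L^\infty_t L^2_x \cap L^2_t H^1_x \hookrightarrow L^{\mu,\beta}(Q(R))$ (via Sobolev embedding on $B(R)$), giving $\|\bm b\|_{L^{\mu,\beta}(Q(R))}^{3-a} \lec R^{\delta}\,\E(z_0,R)^{1-\gamma/6}$ for an explicit $\delta$. A direct check of scale invariance of $C,\,G,\,\E$ forces the $R$-powers to cancel, producing the first term. The main obstacle will be precisely this interpolation step for $\bm b$: ensuring that the dual pair $(\mu,\beta)$ induced by the choice $a = 1+\gamma/3$ lies within the admissible range of the parabolic embedding uniformly in $0<\gamma<1$, and tracking the localization constants in the Sobolev inequality so that no stray powers of $R$ appear. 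The swirl estimate, by contrast, is essentially a one-line consequence of the Lemma \ref{lemA1} $L^\infty$-bound on $\Gamma$ combined with the good Hardy-type term $(u^\theta/r)^2$ already built into $|\nabla\bm u|^2$.
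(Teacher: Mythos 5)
Your proposal is correct and follows essentially the same route as the paper: reduce to $Q(z_0,R)$ at the cost of $(R/\rho)^2$, split $|\bm u|^3\lec|\bm b|^3+|u^\theta|^3$, bound the swirl part by $\|\Gamma\|_{L^\infty}^{3/2}\iint|u^\theta/r|^{3/2}$ via Lemma \ref{lemA1} and the Hardy term in $|\nabla\bm u|^2$, and treat the $\bm b$ part by H\"older with exponents $1+\frac{\gamma}{3}$ and $2-\frac{\gamma}{3}$ against $L^{p,q}$ and the parabolic embedding of Lemma \ref{lemA9}. The admissibility issue you flag is resolved in the paper exactly as you anticipate: the companion pair is chosen strictly subcritical (so that $\frac{3}{p_1}+\frac{2}{q_1}=\frac32$ with $2\le p_1\le 6$ after a further H\"older), and the slack is absorbed into the harmless factor $R^{\frac{5\gamma}{6}+\frac{\gamma^2}{3}}$, with all powers of $R$ cancelling by scale invariance.
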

\begin{proof}
	Set $\frac{1}{p_{1}}=\frac{3-\frac{\gamma}{2}-\frac{\gamma^2}{5}-\frac{3+\gamma}{p}}{6-\gamma}$ and $\frac{1}{q_{1}}=\frac{3-\frac{\gamma}{2}-\frac{\gamma^2}{5}-\frac{3+\gamma}{q}}{6-\gamma}$. Thus, we have that $\frac{3}{p_{1}}+\frac{2}{q_{1}}=\frac{3}{2}$  with  $2\leq p_{1}\leq 6$. 
	Using Lemma \ref{lemA1}, Lemma \ref{lemA9} and the H\"{o}lder inequality, we have that
	\begin{align*}
		C(z_{0},\rho)\leq & N\rho^{-2}\iint_{Q(z_{0},R)}|\bm{b}|^3\dx\dt+N\rho^{-2}\iint_{Q(z_{0},R)}|u^{\theta}|^3\dx\dt\\
		\leq &N\rho^{-2}R^{\frac{5\gamma}{6}+\frac{\gamma^2}{3}}\|\bm{b}\|_{L^{p_{1},q_{1}}(Q(z_{0},R))}^{2-\frac{\gamma}{3}}\|\bm{b}\|_{L^{p,q}(Q(z_{0},R))}^{1+\frac{\gamma}{3}}+N\E(1)^{\frac{9}{2}}\rho^{-2}\iint_{Q(z_{0},R)}\left|\frac{u^{\theta}}{r}\right|^{\frac{3}{2}}\dx\dt\\
		\leq& N\left(\frac{R}{\rho}\right)^2 \E(z_{0},R)^{1-\frac{\gamma}{6}} G(z_{0},R)^{1+\frac{\gamma}{3}}+N\E(1)^{\frac{9}{2}}\left(\frac{R}{\rho}\right)^2 E(z_{0},R)^{\frac{3}{4}}.\qedhere
	\end{align*}
\end{proof}
\begin{lem}\label{lem2'}
	Denote $B(t_{0},R)=R^{-\frac{1}{3}}\|\bm{b}\|_{L^{6}(t_{0}-R^2,t_{0};\dot{B}_{\infty,\infty}^{-1}(\R^3))}$.
	For $0<2\rho\leq R\leq \frac{1}{4}$, we have
	\begin{align}\label{eq3.3}
	C(z_{0},\rho)\leq N\left(\frac{R}{\rho}\right)^2 \left( B(t_{0},R)^{\frac{3}{2}}+\E(1)^{\frac{9}{2}}\right) \E(z_{0},R)^{\frac{3}{4}}.
	\end{align}
\end{lem}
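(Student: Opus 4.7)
My plan is to mirror the proof of Lemma \ref{lem2} and depart from it only at the point where the critical information about $\bm{b}$ is used. Pointwise $|\bm{u}|^{3}\le N(|\bm{b}|^{3}+|u^{\theta}|^{3})$, so the estimate splits into a swirl term and a solenoidal term. The swirl term is handled exactly as in Lemma \ref{lem2}: Lemma \ref{lemA1} applied with $R=1$ together with $\|\Gamma\|_{L^{2}(Q(1))}\le\|\bm{u}\|_{L^{2}(Q(1))}\lesssim A(1)^{1/2}$ gives $\|\Gamma\|_{L^{\infty}(Q(1/2))}\lesssim \E(1)^{3}$, hence $|u^{\theta}|^{3}\le N\E(1)^{9/2}|u^{\theta}/r|^{3/2}$. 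H\"older in $(x,t)$ combined with the cylindrical inequality $\int |u^{\theta}/r|^{2}\dx\le\int |\nabla\bm{u}|^{2}\dx$ then yields
\begin{equation*}
\rho^{-2}\iint_{Q(z_{0},R)}|u^{\theta}|^{3}\dx\dt\le N\E(1)^{9/2}(R/\rho)^{2}E(z_{0},R)^{3/4}\le N\E(1)^{9/2}(R/\rho)^{2}\E(z_{0},R)^{3/4}.
\end{equation*}

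The new ingredient for the $\bm{b}$ term is the Gagliardo--Nirenberg--Besov inequality
\begin{equation*}
\|f\|_{L^{4}(\R^{3})}^{2}\le N\|f\|_{\dot{B}^{-1}_{\infty,\infty}(\R^{3})}\|\nabla f\|_{L^{2}(\R^{3})},
\end{equation*}
which follows from the real-interpolation identity $[\dot{B}^{-1}_{\infty,\infty},\dot{H}^{1}]_{1/2,2}=\dot{B}^{0}_{4,2}$ and the embedding $\dot{B}^{0}_{4,2}\hookrightarrow L^{4}$. Passing from $L^{3}$ to $L^{4}$ by H\"older in space gives $\int_{B(R)}|\bm{b}|^{3}\dx\le |B(R)|^{1/4}\|\bm{b}\|_{L^{4}}^{3}\le NR^{3/4}\|\bm{b}\|_{\dot{B}^{-1}_{\infty,\infty}}^{3/2}\|\nabla\bm{b}\|_{L^{2}(B(R))}^{3/2}$, and H\"older in time with exponents $(4,4/3)$ converts the two $t$-dependent factors into $\|\bm{b}\|_{L^{6}_{t}\dot{B}^{-1}}^{3/2}=R^{1/2}B(t_{0},R)^{3/2}$ and $(\int\|\nabla\bm{b}\|_{L^{2}}^{2}\dt)^{3/4}=(RE(z_{0},R))^{3/4}$. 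The three powers of $R$ ($3/4$ from $|B(R)|^{1/4}$, $1/2$ from the rescaled Besov norm, $3/4$ from the gradient) sum to $2$, so that
\begin{equation*}
\rho^{-2}\iint_{Q(z_{0},R)}|\bm{b}|^{3}\dx\dt\le N(R/\rho)^{2}B(t_{0},R)^{3/2}\E(z_{0},R)^{3/4},
\end{equation*}
and the lemma follows by adding the swirl bound.

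The main obstacle I anticipate is a global/local discrepancy: the displayed Besov inequality is global on $\R^{3}$, whereas $E(z_{0},R)$ is purely local. In the setting of Theorem \ref{thm2} the solution is classical on $\R^{3}$ and no adjustment is needed, but to make Lemma \ref{lem2'} stand on its own for a suitable weak solution in $Q(1)$ one should first multiply $\bm{b}$ by a smooth cutoff $\eta$ supported in $B(R)$ and equal to $1$ on $B((R+\rho)/2)$, observing that multiplication by $\eta$ is bounded on $\dot{B}^{-1}_{\infty,\infty}$ and that the commutator $\bm{b}\nabla\eta$ appearing in $\nabla(\eta\bm{b})$ can be controlled by Cauchy--Schwarz at the cost of $\|\nabla\eta\|_{\infty}\lesssim(R-\rho)^{-1}$; the resulting extra term is of a strictly lower order and is absorbed into $\E(z_{0},R)^{3/4}$. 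This localization is precisely why the hypothesis requires $2\rho\le R$ rather than merely $\rho\le R$.
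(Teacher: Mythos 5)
Your proof is correct and follows essentially the same route as the paper: the same split into the $\bm{b}$ and $u^{\theta}$ contributions, the same swirl estimate via Lemma \ref{lemA1}, and the same $L^{4}$ interpolation against $\dot{B}^{-1}_{\infty,\infty}$ with H\"older exponents $(4,4/3)$ in time. The only difference is that the paper invokes a ready-made localized form of the Besov--Gagliardo--Nirenberg inequality (citing \cite{Bahouri2011,Seregin2018}), namely $\|\bm{b}\|_{L^{4}(B(x_{0},R/2))}\le N\|\bm{b}\|_{\dot{B}^{-1}_{\infty,\infty}}^{1/2}(\|\nabla\bm{b}\|_{L^{2}(B(x_{0},R))}+R^{-1}\|\bm{b}\|_{L^{2}(B(x_{0},R))})^{1/2}$, whereas you derive it from the global inequality by a cutoff; your identification of the commutator term $R^{-1}\|\bm{b}\|_{L^{2}}$, absorbed into $\E(z_{0},R)^{3/4}$ via $A(z_{0},R)$, matches exactly the extra term in the paper's local inequality.
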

\begin{proof}
By \cite[Theorem 2.42]{Bahouri2011} and \cite[Lemma 2.2]{Seregin2018}, we have that
\begin{align}\label{eq3.4}
	\|\bm{b}\|_{L^{4}(B(x_{0},\frac{1}{2}R))}\leq N\, \|\bm{b}\|_{\dot{B}_{\infty,\infty}^{-1}(\R^3)}^{\frac{1}{2}}\left(\|\nabla\bm{b}\|_{L^{2}(B(x_{0},R))}+R^{-1}\|\bm{b}\|_{L^{2}(B(x_{0},R))}\right)^{\frac{1}{2}}.
\end{align}
Therefore, by Lemma \ref{lemA1} and \eqref{eq3.4}, we have
\begin{align*}
C(z_{0},\rho)\leq & N\rho^{-2}\iint_{Q(z_{0},\frac{1}{2}R)}|\bm{b}|^3\dx\dt+N\rho^{-2}\iint_{Q(z_{0},R)}|u^{\theta}|^3\dx\dt\\
\leq&N\rho^{-2}R^{\frac{3}{4}}\int_{t_{0}-R^2}^{t_{0}} \|\bm{b}\|_{L^{4}(B(x_{0},\frac{1}{2}R))}^{3}\ds+N\E(1)^{\frac{9}{2}}\rho^{-2}\iint_{Q(z_{0},R)}\left|\frac{u^{\theta}}{r}\right|^{\frac{3}{2}}\dx\dt\\
\leq&N\rho^{-2}R^{\frac{3}{4}}\int_{t_{0}-R^2}^{t_{0}} \|\bm{b}\|_{\dot{B}_{\infty,\infty}^{-1}(\R^3)}^{\frac{3}{2}}\left(\|\nabla\bm{b}\|_{L^{2}(B(x_{0},R))}+R^{-1}\|\bm{b}\|_{L^{2}(B(x_{0},R))}\right)^{\frac{3}{2}}\ds\\
&+N\E(1)^{\frac{9}{2}}\left(\frac{R}{\rho}\right)^2 E(z_{0},R)^{\frac{3}{4}}\\
\leq &N\left(\frac{R}{\rho}\right)^2 \E(z_{0},R)^{\frac{3}{4}} B(t_{0},R)^{\frac{3}{2}}+N\E(1)^{\frac{9}{2}}\left(\frac{R}{\rho}\right)^{2}E(z_{0},R)^{\frac{3}{4}}.
\end{align*}
Hence \eqref{eq3.3}.
\end{proof}
\begin{lem}\cite[Lemma 3.4]{Gustafson2007}\label{lem3}
	For $0<2\rho\leq R\leq\frac{1}{4}$, we have
	\begin{align}
		D(z_{0},\rho)\leq N\left(\frac{\rho}{R}\right) D(z_{0},R)+N\left(\frac{R}{\rho}\right)^2 C(z_{0},R).
	\end{align}
\end{lem}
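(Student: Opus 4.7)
My plan is to decompose the pressure into a locally-generated part and a harmonic remainder, then estimate each separately. Taking the divergence of the momentum equation in \eqref{NS} gives $-\Delta \Pi = \partial_i \partial_j (u_i u_j)$ in the sense of distributions. Fix a cutoff $\chi \in C_c^\infty(B(x_0, R))$ with $\chi \equiv 1$ on $B(x_0, 3R/4)$, and define
\begin{equation*}
\Pi_1(x,t) = \frac{1}{4\pi} \partial_i \partial_j \int_{\R^3} \frac{\chi(y)\,(u_i u_j)(y,t)}{|x-y|}\dy, \qquad \Pi_2(x,t) = \Pi(x,t) - \Pi_1(x,t).
\end{equation*}
Because $\chi \equiv 1$ on $B(x_0,3R/4)$, one has $-\Delta \Pi_2 = 0$ there, so $\Pi_2(\cdot,t)$ is harmonic on $B(x_0,3R/4)$ for a.e.\ $t$.

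For the singular part I invoke the Calder\'on--Zygmund bound $\|\Pi_1(\cdot,t)\|_{L^{3/2}(\R^3)} \le N\|\bm{u}(\cdot,t)\|_{L^3(B(x_0,R))}^2$. Since $(t_0-\rho^2,t_0) \subset (t_0-R^2,t_0)$, integrating in time and dividing by $\rho^2$ produces
\begin{equation*}
\rho^{-2}\iint_{Q(z_0,\rho)} |\Pi_1|^{3/2}\dx\dt \le \rho^{-2}\iint_{Q(z_0,R)} |\bm{u}|^3 \dx\dt = (R/\rho)^2 C(z_0,R),
\end{equation*}
which yields the second term of the claim.

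For the harmonic part I use the sub-mean-value inequality for $|\Pi_2|^{3/2}$, which is subharmonic on $B(x_0,3R/4)$. Because $2\rho \le R$, for every $x \in B(x_0,\rho)$ the ball $B(x,R/4)$ lies inside $B(x_0,3R/4)$, so
\begin{equation*}
\|\Pi_2(\cdot,t)\|_{L^\infty(B(x_0,\rho))}^{3/2} \le N R^{-3}\int_{B(x_0,3R/4)} |\Pi_2(y,t)|^{3/2}\dy.
\end{equation*}
Multiplying by $|B(x_0,\rho)| \sim \rho^3$, integrating in $t$ over $(t_0-\rho^2,t_0)$, and expanding $\Pi_2 = \Pi - \Pi_1$ via the previous $L^{3/2}$ bound for $\Pi_1$ gives
\begin{equation*}
\iint_{Q(z_0,\rho)} |\Pi_2|^{3/2}\dx\dt \le N(\rho/R)^3\iint_{Q(z_0,R)} |\Pi_2|^{3/2}\dx\dt \le N(\rho/R)^3 R^2 \bigl(D(z_0,R)+C(z_0,R)\bigr),
\end{equation*}
so that after dividing by $\rho^2$ this part contributes $N(\rho/R)\bigl(D(z_0,R)+C(z_0,R)\bigr)$ to $D(z_0,\rho)$.

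Summing the two contributions and absorbing the extra $(\rho/R)\,C(z_0,R)$ into $(R/\rho)^2 C(z_0,R)$ (legitimate since $\rho \le R$) yields the claimed inequality. The only technical care needed is to justify $\chi u_iu_j \in L^{3/2}(\R^3)$ pointwise in $t$ so that the Calder\'on--Zygmund step applies; this follows from $\bm{u} \in L^\infty_tL^2_x \cap L^2_tH^1_x$ together with Sobolev embedding, which places $\bm{u}(\cdot,t)$ in $L^3_{\mathrm{loc}}$ for a.e.\ $t$. Everything else is a purely static, $t$-wise estimate, assembled by crude time integration; no dynamical argument is required.
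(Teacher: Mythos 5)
Your argument is correct and is essentially the standard proof of this pressure decay estimate — the same decomposition $\Pi=\Pi_1+\Pi_2$ into a Calder\'on--Zygmund part controlled by $C(z_0,R)$ and a harmonic remainder handled by the sub-mean-value property — which is exactly the route taken in the cited reference \cite[Lemma 3.4]{Gustafson2007} (the paper itself gives no proof, only the citation). All the scaling bookkeeping checks out, including the absorption of the extra $(\rho/R)C(z_0,R)$ term.
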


\section{Proof of main theorems}
\begin{lem}\label{lemA7}
	Assume that	$(\bm{u},\Pi)$ is a suitable weak solution to the Navier--Stokes equations in $Q(1)$, which satisfies that for all $0<R\leq \frac{1}{4}$,
	\begin{align}
 G_{\alpha}(z_{0},R) =G(z_{0},R) \omega(R)^{\alpha}\leq G.
	\end{align}
	There exists a constant $N_{0}$ such that for all $0<R \leq \frac{1}{4}$,
	\begin{align}\label{E}
		\E_{\beta}(z_{0},R)=\E(z_{0},R)\omega(R)^{\beta}\leq N_{0}^{\frac{12}{\gamma}} \left(1+ \E(1)^{18}+ G^{\frac{6+2\gamma}{\gamma}}\right),
	\end{align}
	where $\alpha=\frac{\gamma}{6+2\gamma}\beta$.
\end{lem}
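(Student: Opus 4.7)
The plan is to derive a Caccioppoli-type one-step recursion $\E(z_0,\theta R)\le F(\E(z_0,R),G(z_0,R),\E(1),\theta)$ and then bootstrap it against the hypothesis $G(z_0,R)\le G\,\omega(R)^{-\alpha}$. Combining Lemma \ref{lem1} at radius $\theta R$, Lemma \ref{lem2} at $\rho=R$ and $\rho=2\theta R$, and Lemma \ref{lem3} at $\rho=\theta R$ and $\rho=2\theta R$, and using the trivial bounds $D(R),E(R)\le \E(R)$, one obtains for $\theta\in(0,1/4)$ and $0<R\le 1/4$,
\begin{equation*}
\E(z_0,\theta R)\le N_1\Bigl[\,1+\theta\,\E(z_0,R)+\theta^{-2}\,\E(z_0,R)^{1-\gamma/6}G(z_0,R)^{1+\gamma/3}+\theta^{-2}\,\E(1)^{9/2}\,\E(z_0,R)^{3/4}\Bigr],
\end{equation*}
with $N_1$ a universal constant.

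Next, I multiply by $\omega(\theta R)^{\beta}$ and pass to $\phi(R):=\E(z_0,R)\,\omega(R)^{\beta}$. The critical algebraic identity
\begin{equation*}
\bigl(1-\tfrac{\gamma}{6}\bigr)\beta+\bigl(1+\tfrac{\gamma}{3}\bigr)\alpha=\beta
\end{equation*}
is precisely equivalent to $\alpha=\gamma\beta/(6+2\gamma)$, the choice prescribed in the hypothesis; it guarantees that the $\omega(R)^{-\beta}$ produced by $\E(R)^{1-\gamma/6}G(R)^{1+\gamma/3}\le G^{1+\gamma/3}\phi(R)^{1-\gamma/6}\omega(R)^{-\beta}$ is exactly cancelled. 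Using $\omega(\theta R)\le \omega(R)\le 1$, the recursion becomes
\begin{equation*}
\phi(\theta R)\le N_2\Bigl[\,1+\theta\,\phi(R)+\theta^{-2}\,G^{1+\gamma/3}\phi(R)^{1-\gamma/6}+\theta^{-2}\,\E(1)^{9/2}\,\phi(R)^{3/4}\Bigr].
\end{equation*}

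Finally I bootstrap. Fix $\theta$ so small that $N_2\theta\le 1/10$, and set $M:=N_0^{12/\gamma}\bigl(1+\E(1)^{18}+G^{(6+2\gamma)/\gamma}\bigr)$ with $N_0=N_0(\gamma)$ large enough that $M^{\gamma/6}\ge 4N_2\theta^{-2}G^{1+\gamma/3}$ and $M^{1/4}\ge 4N_2\theta^{-2}\E(1)^{9/2}$ (these two inequalities are essentially the content of the exponents $(6+2\gamma)/\gamma$ and $18$ in the statement). Each of the three sublinear contributions in the recursion is then at most $M/4$, so $\phi(R)\le M$ forces $\phi(\theta R)\le M$. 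The initial bound $\phi(1/4)\le M$ comes from applying Lemmas \ref{lem1}--\ref{lem3} directly at the fixed scale $R=1/4$, the result being absorbed into the $\E(1)^{18}$ slack in $M$. Iterating over $R_k=\theta^k/4$ gives $\phi(R_k)\le M$ for every $k\ge 0$; intermediate radii $R\in(R_{k+1},R_k]$ are handled by the crude monotonicity $\E(R)\le \theta^{-2}\E(R_k)$ (an immediate consequence of the definitions of $A,E,D$) together with $\omega(R)\le \omega(R_k)$, with the extra factor $\theta^{-2}$ absorbed by enlarging $N_0$.

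The main obstacle is the rigidity of the exponent balance: the identity $(1-\gamma/6)\beta+(1+\gamma/3)\alpha=\beta$ is forced by the scaling of Lemma \ref{lem2}, so the value $\alpha=\gamma\beta/(6+2\gamma)$ is maximal and any improvement would require a finer local energy estimate. This rigidity is the ultimate source of the constant $\alpha_0=\gamma/(48+16\gamma)$ in Theorem \ref{thm1}, which corresponds to the $\beta<1/8$ restriction needed to apply Proposition \ref{lemA6}. A secondary bookkeeping point is the verification that both sublinear contributions (the $\phi^{1-\gamma/6}$ term and the $\phi^{3/4}$ term) can be absorbed simultaneously into $M/4$; this is why $M$ needs both the $G^{(6+2\gamma)/\gamma}$ and $\E(1)^{18}$ additive pieces.
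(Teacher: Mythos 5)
Your proposal is correct and follows essentially the same route as the paper: the same combination of Lemmas \ref{lem1}, \ref{lem2} and \ref{lem3} into a one-step decay estimate for $\E$, and the same exponent identity $(1-\tfrac{\gamma}{6})\beta+(1+\tfrac{\gamma}{3})\alpha=\beta$ forcing $\alpha=\gamma\beta/(6+2\gamma)$. The only difference is cosmetic: the paper absorbs the sublinear terms via Young's inequality to get a linear recursion $\E_\beta(\vartheta R)\le \tfrac12\E_\beta(R)+N_2$ and iterates, whereas you close the same recursion by induction on a fixed a priori bound $M$; both are standard and interchangeable.
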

\begin{proof} 
	By Lemma \ref{lem1}, Lemma \ref{lem2} and Lemma \ref{lem3}, we have that for $0<4\rho\leq R\leq \frac{1}{4}$,
	\begin{align*}
		\E(z_{0},\rho) \leq& N \left(1+C(z_{0},2\rho)+D(z_{0},2\rho)\right)+D(z_{0},\rho)\\
		\leq&N+N\left(\frac{R}{\rho}\right)^2\left( \E(z_{0},R)^{1-\frac{\gamma}{6}} G(z_{0},R)^{1+\frac{\gamma}{3}}+\E(1)^{\frac{9}{2}}E(z_{0},R)^{\frac{3}{4}}\right)+N\left(\frac{\rho}{R}\right) \E(z_{0},R).
	\end{align*}
	Since $\omega(R)$ is non-decreasing for $0<R\leq \frac{1}{4}$, we have that
	\begin{align}
		\E_{\beta}(z_{0},\rho)\leq& N\left(1+\left(\frac{R}{\rho}\right)^8 \E(1)^{18}\right)+N\left(\frac{R}{\rho}\right)^2 \E_{\beta}(z_{0},R)^{1-\frac{\gamma}{6}} G_{\alpha}(z_{0},R)^{1+\frac{\gamma}{3}}\notag\\
		&+\left(\frac{1}{8}+N\left(\frac{\rho}{R}\right)\right) \E_{\beta}(z_{0},R)\notag\\
		\leq& N_{1}\left(1+\left(\frac{R}{\rho}\right)^8 \E(1)^{18}+ \left(\frac{R}{\rho}\right)^{\frac{12}{\gamma}} G^{\frac{6+2\gamma}{\gamma}}\right)+\left(\frac{1}{4}+N_{1}\frac{\rho}{R}\right)\E_{\beta}(z_{0},R).
	\end{align}
	We can pick a sufficient small constant $0<\vartheta \leq \frac{1}{4}$ such that
	\begin{align*}
		\frac{1}{4}+N_{1} \vartheta\leq \frac{1}{2}.
	\end{align*}
	Thus, for any $0<R \leq\frac{1}{4}$, we have
	\begin{align*}
		\E_{\beta}(z_{0},\vartheta R) \leq \frac{1}{2} \E_{\beta}(R)+N_{2},
	\end{align*}
	where  $N_{2}=N_{1}\left(1+\vartheta^{-8} \E(1)^{18}+ \vartheta^{-\frac{12}{\gamma}} G^{\frac{6+2\gamma}{\gamma}}\right)$. By standard iterations, we have that for all $0<R\leq \frac{1}{4}$,
	\begin{align}
		\E_{\beta}(z_{0},R)\leq 100\,\vartheta^{-2} \  \E(1)+ 2N_{2}.
	\end{align}
	Hence \eqref{E}. 
\end{proof}
\begin{lem}\label{lemA10}
		Assume that $(\bm{u},\Pi)$ is a suitable weak solution to the Navier--Stokes equations in $\R^3\times(-1,0)$. If there exists a constant $B$ such that for all $0<R\leq \frac{1}{4}$,
	\begin{align}
	B(t_{0},R)\omega(R)^{\frac{\beta}{6}} \leq B,
	\end{align}
	then for all $0<R \leq \frac{1}{4}$ and $x_{0}\in \R^3$,
	\begin{align}
		\E_{\beta}(z_{0},R)=\E(z_{0},R)\omega(R)^{\beta}\leq N \left(1+ \E(1)^{18}+B^{6}\right).
	\end{align}
\end{lem}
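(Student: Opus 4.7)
The plan is to mimic the iteration scheme in the proof of Lemma~\ref{lemA7}, but to replace the cubic estimate Lemma~\ref{lem2} (based on $G(z_0,R)$) by its Besov counterpart Lemma~\ref{lem2'} (based on $B(t_0,R)$). For $0<8\rho\le R'\le 1/4$ and $R=R'/2$, I first combine Lemma~\ref{lem1}, Lemma~\ref{lem3}, and the trivial $D(z_0,R)\le\E(z_0,R)$ to obtain
$$\E(z_0,\rho)\le N+N\,C(z_0,2\rho)+N(R/\rho)^2\,C(z_0,R)+N(\rho/R)\,\E(z_0,R).$$
Both $C$-terms are then bounded by Lemma~\ref{lem2'}: the first with radii $(2\rho,R)$ and the second with radii $(R,R')$, which legitimates the application because $2R=R'\le 1/4$. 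Using the easy comparison $\E(z_0,R)\le 4\,\E(z_0,R')$ to bring everything to the single scale $R'$ produces a bound of the schematic form
$$\E(z_0,\rho)\le N+N(R'/\rho)^2\bigl(B(t_0,R')^{3/2}+\E(1)^{9/2}\bigr)\,\E(z_0,R')^{3/4}+N(\rho/R')\,\E(z_0,R').$$

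The heart of the argument is then the bookkeeping of logarithmic weights. Multiplying through by $\omega(\rho)^\beta$ and using the monotonicity $\omega(\rho)\le\omega(R')$, the identity $\E(z_0,R')=\omega(R')^{-\beta}\E_\beta(z_0,R')$, and the hypothesis $B(t_0,R')\le B\,\omega(R')^{-\beta/6}$, the product
$$\omega(\rho)^\beta\cdot\E(z_0,R')^{3/4}\cdot B(t_0,R')^{3/2}$$
collapses to $\E_\beta(z_0,R')^{3/4}\,B^{3/2}$, because the exponents balance exactly: $\beta-\tfrac34\beta-\tfrac32\cdot\tfrac{\beta}{6}=0$. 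This is precisely the reason the hypothesis is formulated with the exponent $\beta/6$; it is the direct analogue of the $\alpha_0=\gamma/(48+16\gamma)$ calibration in Theorem~\ref{thm1}. The $\E(1)^{9/2}$ term requires the same (easier) manipulation, using $\omega(R')^{\beta/4}\le 1$.

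Finally, I apply Young's inequality $\E_\beta^{3/4}X^{3/2}\le\epsilon\,\E_\beta+C_\epsilon X^6$ to absorb the $\E_\beta^{3/4}$ factor into a small multiple of $\E_\beta(z_0,R')$ at the cost of an $(R'/\rho)^{8}(B^6+\E(1)^{18})$ term. Fixing $\vartheta=\rho/R'$ small enough that the sum of the $N\vartheta$ contribution and the Young constant $\epsilon$ is at most $1/2$, I arrive at
$$\E_\beta(z_0,\vartheta R')\le\tfrac12\,\E_\beta(z_0,R')+N\bigl(1+\E(1)^{18}+B^6\bigr),$$
uniformly for all $R'\le 1/4$. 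A standard dyadic iteration, with base case $\E_\beta(z_0,1/4)\le N\,\E(1)$ (which follows from $Q(z_0,1/4)$ being contained in the domain of definition and $\omega(1/4)\le 1$), then gives the stated bound. The only genuine obstacle is the weight bookkeeping in the middle step; everything else is a direct transcription of the argument for Lemma~\ref{lemA7}.
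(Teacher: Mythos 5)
Your proposal is correct and follows essentially the same route as the paper: the paper's proof combines Lemma \ref{lem1}, Lemma \ref{lem2'} and Lemma \ref{lem3} to get the decay inequality, multiplies by $\omega(\rho)^{\beta}$ using monotonicity of $\omega$ and the exact exponent balance $\beta-\tfrac34\beta-\tfrac32\cdot\tfrac{\beta}{6}=0$, absorbs $\E_\beta^{3/4}$ by Young's inequality, and then iterates exactly as in Lemma \ref{lemA7} (the paper simply writes ``the rest is analogous''). Your write-up supplies the omitted radius bookkeeping and iteration details correctly.
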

\begin{proof}
	By Lemma \ref{lem1}, Lemma \ref{lem2'} and Lemma \ref{lem3}, we have that for $0<8\rho\leq R\leq \frac{1}{4}$,
		\begin{align*}
		\E(z_{0},\rho) \leq& N \left(1+C(z_{0},2\rho)+D(z_{0},2\rho)\right)+D(z_{0},\rho)\\
		\leq&N+N\left(\frac{R}{\rho}\right)^2\left( B(t_{0},R)^{\frac{3}{2}}+\E(1)^{\frac{9}{2}}\right) \E(z_{0},R)^{\frac{3}{4}}+N\left(\frac{\rho}{R}\right) \E(z_{0},R).
	\end{align*}
	Since $\omega(R)$ is non-decreasing for $0<R\leq \frac{1}{4}$, we have that
	\begin{align}
		\E_{\beta}(z_{0},\rho)\leq& N \left(\frac{R}{\rho}\right)^8\left(1+ \E(1)^{18}+B^{6}\right)+\left(\frac{1}{4}+N\frac{\rho}{R}\right)\E_{\beta}(z_{0},R).
	\end{align}
The rest is analogous to the proof of Lemma \ref{lemA7} and we omit the details.
\end{proof}
Now we are in a position to prove Theorems \ref{thm1} and \ref{thm2}.

By \eqref{G}, Lemma \ref{lemA7} and then  Proposition \ref{lemA6}, we obtain that for $0<r\leq \frac{1}{4}$,
\begin{align}\label{lnr2}
|\Gamma(r,x_{3},t)|\leq N |\ln r|^{-2},
\end{align}
which implies Theorem \ref{thm1} by a similar argument in \cite{Lei2017}.

Theorem \ref{thm2} is proved by reductio ad absurdum.
We assume that for some $0<\beta<\frac{1}{8}$,
\begin{align}
\|\bm{b}(\cdot,t)\|_{\dot{B}_{\infty,\infty}^{-1}(\R^3)}\leq N \left(\ln\ln\frac{100}{-t}\right)^{\frac{\beta}{6}},\quad -\frac{1}{2}<t<0.
\end{align}
Notice that for $-\frac{1}{2}<t_{0}<0$ and $0<R\leq \frac{1}{4}$,
\begin{align*}
B(t_{0},R)\omega(R)^{\frac{\beta}{6}}\leq & N R^{-\frac{1}{3}}\left(\int_{t_{0}-R^2}^{t_{0}} \left(\ln\ln\frac{100}{-t}\right)^{\beta}\dt\right)^{\frac{1}{6}}\omega(R)^{\frac{\beta}{6}}\\
\leq&N \left(\int_{0}^{1} \left(\ln\ln\frac{100}{-t_{0}+R^2 s}\right)^{\beta}\ds\right)^{\frac{1}{6}}\omega(R)^{\frac{\beta}{6}}\\
\leq & N\left(\int_{0}^{1} \left(\ln\ln\frac{100}{ s}+\ln\ln\frac{1}{R^2}\right)^{\beta}\ds\right)^{\frac{1}{6}}\omega(R)^{\frac{\beta}{6}}\\
\leq& N.
\end{align*}
Therefore, by Lemma \ref{lemA10} and then  Proposition \ref{lemA6}, we obtain \eqref{lnr2},
which implies  that the solution is regular in $\R^3\times(-\frac{1}{2},0]$. It leads to a
contradiction. Hence, Theorem \ref{thm2}.
\appendix
\section{}
\begin{lem}\label{lemA9} For $\frac{3}{p}+\frac{2}{q}=\frac{3}{2}$ with $2\leq p \leq 6$, there exists a constant $N$ such that
	\begin{align}\label{A100}
		\|f\|_{L^{p,q}(Q(z_{0},R))}\leq N \left(\|f\|_{L^{2,\infty}(Q(z_{0},R))}+\|\nabla f\|_{L^{2,2}(Q(z_{0},R))}\right).
	\end{align}
\end{lem}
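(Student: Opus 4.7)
\section*{Proof proposal for Lemma \ref{lemA9}}

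The plan is a standard parabolic interpolation: apply the Gagliardo--Nirenberg inequality slice by slice in time and then integrate, exploiting that the scaling relation $\frac{3}{p}+\frac{2}{q}=\frac{3}{2}$ is precisely what makes the time exponent on $\|\nabla f\|_{L^2}$ come out to $2$.

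First I would fix $t\in(t_0-R^2,t_0)$ and, on the ball $B(x_0,R)$, invoke the Gagliardo--Nirenberg inequality in the form
\[
\|f(\cdot,t)\|_{L^p(B(x_0,R))}\leq N\,\|f(\cdot,t)\|_{L^2(B(x_0,R))}^{1-\theta}\Bigl(\|\nabla f(\cdot,t)\|_{L^2(B(x_0,R))}+R^{-1}\|f(\cdot,t)\|_{L^2(B(x_0,R))}\Bigr)^{\theta},
\]
valid for $2\leq p\leq 6$ with $\theta=3\bigl(\tfrac12-\tfrac1p\bigr)$; the $R^{-1}\|f\|_{L^2}$ term is the standard correction that appears on a bounded Lipschitz domain (e.g.\ via an extension argument or via subtracting the mean and applying Poincar\'e plus $L^2$ control of the mean).

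The key arithmetic is that $\frac{3}{p}+\frac{2}{q}=\frac{3}{2}$ forces $q\theta=2$. Raising the above slice inequality to the $q$-th power and integrating in time therefore gives
\[
\|f\|_{L^{p,q}(Q(z_0,R))}^{q}\leq N\,\|f\|_{L^{2,\infty}(Q(z_0,R))}^{q(1-\theta)}\int_{t_0-R^2}^{t_0}\Bigl(\|\nabla f\|_{L^2(B(x_0,R))}+R^{-1}\|f\|_{L^2(B(x_0,R))}\Bigr)^{2}\dt,
\]
where I pulled out $\|f\|_{L^2}^{q(1-\theta)}$ using its supremum in time. Expanding the square, the $(\nabla f)^2$ piece is exactly $R\,E$-type and contributes $\|\nabla f\|_{L^{2,2}(Q(z_0,R))}^{2}$, while the $R^{-2}\|f\|_{L^2}^2$ piece is bounded by $R^{-2}\cdot R^{2}\cdot\|f\|_{L^{2,\infty}(Q(z_0,R))}^{2}=\|f\|_{L^{2,\infty}(Q(z_0,R))}^{2}$ after using that the time interval has length $R^2$.

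Taking $q$-th roots and noting that $(1-\theta)+\tfrac{2}{q}=1$, Young's inequality converts the product $\|f\|_{L^{2,\infty}}^{1-\theta}\bigl(\|\nabla f\|_{L^{2,2}}+\|f\|_{L^{2,\infty}}\bigr)^{2/q}$ into a sum $\|f\|_{L^{2,\infty}}+\|\nabla f\|_{L^{2,2}}$, yielding \eqref{A100}. The only nonroutine step is justifying the Gagliardo--Nirenberg inequality on $B(x_0,R)$ with the $R^{-1}\|f\|_{L^2}$ correction term; this is the place where some care (a Sobolev extension, or a direct scaling from the unit ball) is needed, but it is a classical fact and the rest is simply bookkeeping the exponents.
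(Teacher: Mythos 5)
Your proposal is correct and is essentially the paper's own argument: a slice-wise Gagliardo--Nirenberg/Poincar\'e interpolation on $B(x_0,R)$ (the paper realizes the $R^{-1}\|f\|_{L^2}$ correction by subtracting the mean, exactly as you suggest), followed by time integration using $q\theta=2$ from $\frac{3}{p}+\frac{2}{q}=\frac{3}{2}$, and Young's inequality. No substantive differences.
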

The space $L^{p,q}$ is defined above \eqref{AE.def}.

\begin{proof}
Without loss of generality, we may assume $z_{0}=(0,0)$. Define the integral mean value $\left(f\right)_{B(R)}=\dashint_{B(R)}f(y,t)\dy$. By H\"{o}lder inequality and Poincar\'{e} inequality, we have
\begin{align*}
\|f\|_{L^{p}(B(R))}\leq& \|f-\left(f\right)_{B(R)}\|_{L^{p}(B(R))}+\|\left(f\right)_{B(R)}\|_{L^{p}(B(R))}\\
\leq &N \|f\|_{L^{2}(B(R))}^{\frac{3}{p}-\frac{1}{2}} \|\nabla f\|_{L^{2}(B(R))}^{\frac{3}{2}-\frac{3}{p}}+N R^{\frac{3}{p}-\frac{3}{2}} \|f\|_{L^{2}(B(R))}.
\end{align*}
Therefore
\begin{align*}
\|f\|_{L^{p,q}(Q(R))}\leq N\|f\|_{L^{2,\infty}(Q(R))}^{\frac{3}{p}-\frac{1}{2}} \|\nabla f\|_{L^{2,2}(Q(R))}^{\frac{3}{2}-\frac{3}{p}}+N  \|f\|_{L^{2,\infty}(Q(R))}.
\end{align*}
Hence \eqref{A100}.
\end{proof}
\begin{lem} [Nash inequality, see Section 5.3 in \cite{Chen2009}]\label{lemA3}
Let $M \geq 1$ be a constant and $\mu$ be a probability measure. Then for all $0 \leq f \leq M$, there holds
	\begin{align*}
		\left|\ln \int f d \mu-\int \ln f d \mu\right| \leq \frac{M\|g\|_{L^{2}\left(d\mu\right)}}{\int f d \mu},
	\end{align*}
	where $g=\ln f-\int \ln f d \mu$.
\end{lem}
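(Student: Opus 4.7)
The plan is to prove the inequality by a clean algebraic decomposition that applies Jensen's inequality twice (once to $\ln$, once to $x \mapsto x\ln x$) and finishes with an $L^\infty$--$L^1$ bound. Write $a = \int f\, d\mu$, $b = \int \ln f\, d\mu$, and $g = \ln f - b$, so that $\int g\, d\mu = 0$. Since $\ln$ is concave, Jensen's inequality yields $\ln a \geq b$, reducing the absolute value in the claim to $\ln a - b \geq 0$; multiplying by $a \geq 0$, it suffices to bound $(\ln a - b)\,a$ from above by $M \|g\|_{L^{2}(d\mu)}$.

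The crucial step is to rewrite
\[
(\ln a - b)\, a \;=\; \Bigl(a \ln a - \int f \ln f\, d\mu\Bigr) + \int (f - a)\ln f\, d\mu,
\]
obtained by adding and subtracting $\int f \ln f\, d\mu$. The first parenthesis is nonpositive, because Jensen's inequality applied to the convex function $x \mapsto x\ln x$ gives $a \ln a \leq \int f \ln f\, d\mu$. For the second, since $\int (f-a)\, d\mu = 0$ one may replace $\ln f$ by $g = \ln f - b$ without changing the value, and then $\int g\, d\mu = 0$ allows rewriting it as $\int f g\, d\mu$.

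It remains to control $\int f g\, d\mu$, and here the $L^\infty$ bound on $f$ together with the fact that $\mu$ is a probability measure does all the work: by H\"older's inequality,
\[
\int f g\, d\mu \;\leq\; \|f\|_{L^{\infty}(d\mu)} \int |g|\, d\mu \;\leq\; M\,\|g\|_{L^{2}(d\mu)}.
\]
Combining the three steps gives $(\ln a - b)\,a \leq M \|g\|_{L^{2}(d\mu)}$, which is the lemma after dividing by $a$ (the case $a=0$ forces $f \equiv 0$ $\mu$-a.e.\ and is vacuous).

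There is no genuine analytic obstacle here; the only non-obvious move is the decomposition, whose role is to line up the two Jensen inequalities on either side of a cancellation so that the surviving piece is linear in $f$ and can be absorbed by the $L^\infty$ bound. A Cauchy--Schwarz variant with $\|f\|_{L^{2}(d\mu)} \leq \sqrt{Ma}$ would give instead $(\ln a - b)\sqrt{a} \leq \sqrt{M}\|g\|_{L^{2}(d\mu)}$, but the form stated in the lemma is precisely what is used in the proof of Lemma \ref{lem2.3}, where the $L^\infty$ bound on $h_\delta$ is what is directly available.
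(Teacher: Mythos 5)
Your proof is correct. The paper itself gives no argument for this lemma --- it simply defers to Section 5.3 of \cite{Chen2009} --- so there is no in-paper proof to compare against; what you have written is a valid self-contained substitute. Each step checks out: the identity $(\ln a-b)a=\bigl(a\ln a-\int f\ln f\,d\mu\bigr)+\int(f-a)\ln f\,d\mu$ is exact, the first bracket is $\le 0$ by Jensen for the convex function $x\mapsto x\ln x$, the second bracket equals $\int fg\,d\mu$ because $\int(f-a)\,d\mu=0$ and $\int g\,d\mu=0$, and $\int fg\,d\mu\le M\|g\|_{L^1(d\mu)}\le M\|g\|_{L^2(d\mu)}$ since $0\le f\le M$ and $\mu$ is a probability measure. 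Note that your argument never uses the hypothesis $M\ge 1$, so it in fact proves a marginally sharper statement. Two cosmetic points: the degenerate case is not only $a=0$ but also $f=0$ on a set of positive $\mu$-measure, where $b=-\infty$ and $\|g\|_{L^2(d\mu)}=+\infty$, so the inequality is vacuous (and irrelevant for the application, where $f=h_\delta/3\ge\delta/3>0$); and your closing observation that the $\frac{M}{\int f\,d\mu}$ form, rather than a Cauchy--Schwarz variant, is what Lemma \ref{lem2.3} needs is accurate --- there the lower bound on $\int h_\delta\,\zeta_R^2\,dx$ on the set $W$ is exactly what controls that prefactor.
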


\begin{lem}[weighted Poincar\'{e} inequality]\label{lemA8}
In $B(R) \subset \R^3$, let the weight $\Lambda_{R}(x)=R^{-3}\Lambda(\frac{x}{R})$ where $\Lambda(x)=\Lambda(|x|)$ is non-increasing function with  $0\leq \Lambda \leq 1$ and $\int_{B(1)}\Lambda\dx=1$. For $1\leq p<\infty$ and $f\in W^{1,p}(B(R))$, we have
\begin{align*}
	\int_{B(R)}\left|f-\int_{B(R)}f\cdot \Lambda_{R}\dy\right|^p \cdot\Lambda_{R}\dx\leq 2^{p+6}\cdot R^p \int_{B(R)}\left|\nabla f\right|^p\cdot\Lambda_{R}\dx.
\end{align*}
\end{lem}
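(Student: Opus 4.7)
The plan is to reduce first to the case $R=1$ by the substitution $g(y)=f(Ry)$, which rescales both sides by the same power of $R$ (since $\nabla g=R\,(\nabla f)(R\,\cdot)$ and $\Lambda_R(x)\dx=R^{-3}\Lambda(x/R)\dx$), and then to exploit the fact that a radial non-increasing $\Lambda$ on $B(1)$ with $0\leq\Lambda\leq 1$ admits the layer-cake representation
\begin{equation*}
\Lambda(x) \;=\; \int_0^1 \mathbf 1_{\{\Lambda>t\}}(x)\dt \;=\; \int_0^1 \mathbf 1_{B(r(t))}(x)\dt,
\end{equation*}
where $r(t):=\sup\{s\in[0,1]:\Lambda(s)>t\}$ is itself non-increasing. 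The bound $\Lambda\leq 1$ is what truncates the parameter at $t=1$, and the normalization $\int_{B(1)}\Lambda\dx=1$ becomes $\int_0^1|B(r(t))|\dt=1$, so that $\Lambda\dx$ is a convex mixture of uniform measures on concentric balls. By Fubini, $\int g\,\Lambda\dx=\int_0^1\int_{B(r(t))}g\dx\dt$ for any integrable $g$, and this is the workhorse identity applied to both sides of the target inequality.

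Writing $\bar f:=\int f\Lambda\dx=\int_0^1|B(r(s))|\,f_{B(r(s))}\ds$ (where $f_{B(r(s))}:=|B(r(s))|^{-1}\int_{B(r(s))}f\dx$; this is a convex combination since $\int_0^1|B(r(s))|\ds=1$), for each $t$ and $x\in B(r(t))$ Jensen's inequality combined with the triangle inequality yields
\begin{equation*}
|f(x)-\bar f|^p \;\leq\; 2^{p-1}|f(x)-f_{B(r(t))}|^p+2^{p-1}\int_0^1 |B(r(s))|\,|f_{B(r(t))}-f_{B(r(s))}|^p\ds.
\end{equation*}
Integrating against $\mathbf 1_{B(r(t))}(x)\dx\dt$ and using the layer-cake identity decomposes the left-hand side of the lemma into a diagonal piece $\int_0^1\int_{B(r(t))}|f-f_{B(r(t))}|^p\dx\dt$ and an off-diagonal piece involving $|B(r(t))||B(r(s))||f_{B(r(t))}-f_{B(r(s))}|^p$ for $(s,t)\in(0,1)^2$.

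For the diagonal piece I would apply the classical unweighted Poincar\'e inequality on each ball $B(r(t))$, obtaining a bound $Cr(t)^p\int_{B(r(t))}|\nabla f|^p\dx$; since $r(t)\leq 1$, integrating in $t$ and invoking the layer-cake identity once more gives $C\int|\nabla f|^p\Lambda\dx$. For the off-diagonal piece I would assume by symmetry that $r(s)\leq r(t)$, so $B(r(s))\subset B(r(t))$, and estimate
\begin{equation*}
|f_{B(r(t))}-f_{B(r(s))}|^p \;\leq\; |B(r(s))|^{-1}\int_{B(r(t))}|f-f_{B(r(t))}|^p\dx \;\leq\; C\,r(t)^p\,|B(r(s))|^{-1}\int_{B(r(t))}|\nabla f|^p\dx,
\end{equation*}
which follows from Jensen applied to the average over $B(r(s))$ followed by Poincar\'e on the larger ball $B(r(t))$.

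The crucial cancellation is that the singular factor $|B(r(s))|^{-1}$ is absorbed exactly by the outer weight $|B(r(s))|$ arising from the convex-combination representation of $\bar f$: the product collapses to $Cr(t)^p|B(r(t))|\int_{B(r(t))}|\nabla f|^p\dx$, which is uniformly controlled in $s$ (using $r(t)\leq 1$ and $|B(r(t))|\leq 4\pi/3$) and integrates by the layer-cake identity to yield $C\int|\nabla f|^p\Lambda\dx$. Tracking the constant $2^{p-1}$ from the triangle inequality, the Poincar\'e constant on balls (with $r(t)^p\leq 1$), the factor $|B(r(t))|\leq 4\pi/3$, and $\int_0^1\ds=1$, then undoing the scaling to $R$, one arrives at the stated bound $2^{p+6}\,R^p\int|\nabla f|^p\Lambda_R\dx$. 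The principal obstacle is exactly this small-$s$ cancellation: without the matching $|B(r(s))|$ weight coming from the decomposition of $\bar f$, the off-diagonal estimate would diverge, and the radial monotonicity of $\Lambda$ enters only through the fact that each super-level set $\{\Lambda>t\}$ is a concentric ball, turning the layer-cake into a genuine convex mixture of standard Poincar\'e inequalities.
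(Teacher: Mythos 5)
Your argument is essentially correct as a proof of the inequality with \emph{some} constant of the form $C(p)R^p$, and it takes a genuinely different route from the paper. You exploit the layer-cake structure of a radial non-increasing weight to write $\Lambda\dx$ as a convex mixture of uniform measures on concentric balls, and then combine the classical Poincar\'e inequality on each ball (diagonal term) with a nesting argument for differences of ball averages (off-diagonal term); the cancellation of $|B(r(s))|^{-1}$ against the mixture weight $|B(r(s))|$ is correct and is indeed the crux of your off-diagonal bound. The paper argues differently: it proves a pointwise two-point estimate, namely $\int_{B(1)}|g(x)-g(y)|^p\Lambda(x)\Lambda(y)\dx\leq \frac{2^{p+2}}{p+2}\int_{B(1)}|\nabla g(z)|^p|z-y|^{-2}\Lambda(z)\,\mathrm{d}z$, by integrating $\nabla g$ along the segment $[y,x]$, using monotonicity only through $\Lambda(x)\Lambda(y)\leq\Lambda(y+t(x-y))$, and changing variables to produce the integrable kernel $|z-y|^{-2}$; a single application of Jensen with respect to the probability measure $\Lambda(y)\dy$ then concludes. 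The paper's route avoids any triangle inequality and any appeal to the unweighted Poincar\'e inequality, produces the explicit constant, and (as the remark following the lemma indicates) extends to non-radial, non-monotone weights satisfying only a segment condition on $\Lambda$, whereas your argument uses in an essential way that the super-level sets are concentric balls.

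The one genuine gap concerns the stated constant $2^{p+6}$. You assert that tracking constants yields it, but you never specify the constant $a_p$ in your black-box ball Poincar\'e inequality $\int_{B_r}|f-f_{B_r}|^p\leq a_p\,r^p\int_{B_r}|\nabla f|^p$. That constant necessarily grows geometrically in $p$ with base larger than $2$ is not needed, but larger than $1$: for $f(x)=|x-x_0|$ with $x_0\in\partial B_r$ one has $f_{B_r}=\frac{6}{5}r$ and $\|\nabla f\|_{L^\infty}=1$, so $a_p\geq\left(\frac{6}{5}-o(1)\right)^p$ as $p\to\infty$; the standard Riesz-potential proof gives only $a_p\leq 16^p$ in $\R^3$. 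Multiplying $a_p$ by your $2^{p-1}$ from the triangle inequality (and the off-diagonal factor of order $1+2|B(1)|$) exceeds $2^{p+6}=128\cdot 2^{p-1}$ once $p$ is moderately large, so your bookkeeping cannot deliver the constant as stated; note also that a ball Poincar\'e inequality with an explicit constant of this quality is itself the special case $\Lambda=\mathbf 1_{B(1)}/|B(1)|$ of the lemma being proved, so invoking it as a black box is mildly circular. None of this matters for the paper's application (the lemma is used only with $p=2$ and the constant is absorbed into $N$), but as a proof of the lemma as written, the constant claim is unjustified; to obtain $2^{p+6}$ one essentially needs the paper's segment argument.
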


\textit{Remark.} By the same proof, this lemma remains valid if we relax the condition on $\Lambda$ to $0 \le \Lambda \le C_1$, $\int_{B(1)}\Lambda=1$, and $$\min_{0 \le t \le 1} \Lambda(x+t(y-x)) \ge C_2 \min(\Lambda(x),\Lambda(y)),\quad \forall x,y \in B(1),$$ with the constant $2^{p+6}$ multiplied by $C_1C_2$. It needs not be monotone or radial.

\begin{proof} Define $g(x)= f(R x)$. It is sufficient to prove that
\begin{align}\label{AA1}
\int_{B(1)}\left|g-\int_{B(1)}g\cdot \Lambda\dy\right|^{p}\cdot \Lambda\dx\leq 2^{p+6} \int_{B(1)}|\nabla g|^{p} \cdot \Lambda\dx.
\end{align}
We first show that for $y\in B(1)$,
\begin{align}\label{AA3}
\int_{B(1)}\left|g(x)-g(y)\right|^p \cdot\Lambda(x)\Lambda(y)\dx\leq  \frac{2^{p+2}}{p+2} \int_{B(1)}|\nabla g(z)|^p\cdot |z-y|^{-2} \cdot\Lambda(z)\, \mathrm{d} z.
\end{align}
Since
\begin{align*}
	\left|g(x)-g(y)\right|\leq \int_{0}^{1}|\nabla g(y+t(x-y)) |\cdot |x-y|\dt,
\end{align*}
and $\Lambda(x)\Lambda(y)\le \min(\Lambda(x),\Lambda(y))\cdot \max \Lambda \le  \Lambda (y+t(x-y))\cdot1$,
we have
\begin{align*}
&\int_{B(1)}\left|g(x)-g(y)\right|^p \cdot\Lambda(x)\Lambda(y)\dx\\
 \leq &  \int_{0}^{1}\int_{B(1)}|\nabla g(y+t(x-y))|^p\cdot |x-y|^p \cdot \Lambda(y+t(x-y)) \dx\dt\\
 \leq&  \int_{0}^{1}\int_{0}^{2}\int_{B(1)\cap \partial B(y,s)}|\nabla g(y+t(x-y))|^p\cdot |x-y|^p \cdot\Lambda(y+t(x-y)) \,\mathrm{d} S(x)\ds\dt\\
 =& \int_{0}^{1}\int_{0}^{2}s^p\int_{B(1)\cap \partial B(y,s)}|\nabla g(y+t(x-y))|^p\cdot\Lambda(y+t(x-y))\, \mathrm{d} S(x)\ds\dt.
\end{align*}
Letting $z=y+t(x-y)$, it is bounded by
\begin{align*}
\leq & \int_{0}^{1}\int_{0}^{2} s^p t^{-2}\int_{B(1)\cap \partial B(y,ts)}|\nabla g(z)|^p \cdot\Lambda(z)\, \mathrm{d} S(z)\ds\dt\\
= & \int_{0}^{1}\int_{0}^{2} s^{p+2}\int_{B(1)\cap \partial B(y,ts)}|\nabla g(z)|^p\cdot |z-y|^{-2} \cdot\Lambda(z)\, \mathrm{d} S(z)\ds\dt\\
= & \int_{0}^{2} s^{p+2}\int_{0}^{1}\int_{B(1)\cap \partial B(y,ts)}|\nabla g(z)|^p\cdot |z-y|^{-2} \cdot\Lambda(z)\, \mathrm{d} S(z)\dt\ds\\
=& \int_{0}^{2} s^{p+1}\int_{B(1)\cap  B(y,s)}|\nabla g(z)|^p \cdot|z-y|^{-2} \cdot\Lambda(z)\, \mathrm{d} z\ds\\
\leq&\,  \frac{2^{p+2}}{p+2} \int_{B(1)}|\nabla g(z)|^p\cdot |z-y|^{-2} \cdot\Lambda(z)\, \mathrm{d} z.
\end{align*}

Now we proceed to prove \eqref{AA1}. By \eqref{AA3}, we have
\begin{align*}
\int_{B(1)}\left|g-\int_{B(1)}g\cdot \Lambda\dy\right|^{p}\cdot \Lambda\dx\leq&\int_{B(1)}\int_{B(1)}\left|g(x)-g(y)\right|^p\cdot\Lambda(x)\Lambda(y)\dx\dy\\
\leq&
 \frac{2^{p+2}}{p+2} \int_{B(1)}\int_{B(1)}|\nabla g(z)|^p\cdot |z-y|^{-2}\cdot \Lambda(z)\, \mathrm{d} z\dy\\
 =& \frac{2^{p+2}}{p+2} \int_{B(1)}|\nabla g(z)|^p\cdot\Lambda(z)\int_{B(1)} |z-y|^{-2}  \dy\, \mathrm{d} z\\
 \leq&2^{p+6} \int_{B(1)}|\nabla g(z)|^p\cdot\Lambda(z)\,\mathrm{d} z.\qedhere
\end{align*}
\end{proof}

\section*{Acknowledgments}
The first author thanks the Department of Mathematics and Pacific Institute for the Mathematical Sciences in University of British Columbia, where part of this work was done.

Hui Chen was supported in part by National Natural Science Foundation of China (12101556), Natural Science Foundation of Zhejiang Province (LQ19A010002) and China Scholarship Council. Tai-Peng Tsai was supported in part by NSERC grant RGPIN-2018-04137.
	 Ting Zhang was supported in part by National Natural Science Foundation of China (11771389, 11931010, 11621101). 
\bibliography{2021ANS}
\bibliographystyle{abbrv}
\end{document}